\renewcommand{\qed}{\qedsymbol}
\newcommand{\NPC}{NP-complete}
\newtheorem{theorem}{Theorem}[section]{\bfseries}{\itshape}
\newtheorem{lemma}[theorem]{Lemma}{\bfseries}{\itshape}
\newtheorem{conjecture}[theorem]{Conjecture}{\bfseries}{\itshape}
\newtheorem{corollary}[theorem]{Corollary}{\bfseries}{\itshape}
\newtheorem{observation}[theorem]{Observation}{\bfseries}{\itshape}
\newtheorem{proposition}[theorem]{Proposition}{\bfseries}{\itshape}
\newtheorem{claim}{Claim}[theorem]
\newtheorem{construction}{Construction}
\DeclarePairedDelimiter\ceil{\lceil}{\rceil}
\DeclarePairedDelimiter\floor{\lfloor}{\rfloor}
\tikzstyle{filled vertex}  = [{circle,draw=blue,fill=black!50,inner sep=1pt}]  
\tikzstyle{empty vertex}  = [{circle, draw, fill = white, inner sep=1.5pt, minimum width=1.5pt}]
\tikzset{
  corner/.style  = {fill=gray!30, draw= gray, thick, inner sep=2pt},
  b-vertex/.style = {fill=RubineRed, diamond, draw=blue, inner sep=1.5pt},
  a-vertex/.style = {draw=blue, fill=cyan, inner sep=2pt}
}
\newcommand{\BNP}{\textsc{BNP}}
\newcommand{\DP}{\textsc{D3P}}
\title{\textbf{Graph Burning: Bounds and Hardness}}
\author[1,2]{\normalsize \textbf{Dhanyamol Antony}\orcidlink{0000-0001-7875-3457}}
\author[1]{\textbf{L. Sunil Chandran}\orcidlink{0000-0001-5451-6975}}
\author[3]{\textbf{Anita Das}\orcidlink{0000-0002-0126-4716}}
\author[1]{\textbf{Shirish Gosavi}}
\author[1,4]{\textbf{Dalu Jacob}}
\author[1,5]{\textbf{Shashanka Kulamarva}\orcidlink{0009-0002-2982-6044}}
\affil[1]{\small Department of Computer Science and Automation, Indian Institute of Science, Bengaluru, India}
\affil[2]{School of Data Science, Indian Institute of Science Education and Research, Thiruvananthapuram, India}
\affil[3]{Department of Mathematics, Manipal Institute of Technology Bengaluru, Manipal Academy of Higher Education, Manipal, India}
\affil[4]{Department of Mathematics, Indian Institute of Technology Delhi, India}
\affil[5]{Graduate School of Informatics, Kyoto University, Kyoto, Japan}
\affil[ ]{Email: \texttt{dhanyamolantony@iisertvm.ac.in, sunil@iisc.ac.in, anita.das@manipal.edu, shirishgp@iisc.ac.in, dalujacob@maths.iitd.ac.in, kulamarva.shashanka.3k@kyoto-u.ac.jp}}
\date{}
\begin{document}

\maketitle

\begin{abstract}
    \noindent 
    \emph{Graph burning} is a discrete-time process that models the propagation of information in a network. Given an undirected graph whose vertices are initially unburned, the process evolves in discrete rounds. At each round, an unburned vertex is selected and burned, while any unburned vertex adjacent to a vertex burned in the previous round also becomes burned. The \emph{burning number} of a graph is the minimum number of steps to burn all its vertices. The \textsc{Burning Number problem} asks whether the burning number of an input graph $G$ is at most $k$.
    In this paper, we investigate the graph burning problem from both algorithmic and structural viewpoints. Although the problem is known to be NP-complete on interval graphs, we strengthen this result by proving that it remains NP-complete even when restricted to connected proper interval graphs.
    We also study the burning number of $P_k$-free graphs. Motivated by the well-known \emph{burning number conjecture}, which states that every connected graph of order $n$ has burning number at most $\lceil \sqrt{n}~\rceil$, we establish an improved upper bound for connected $P_k$-free graphs and show that this bound is tight up to an additive constant of $1$.
    Finally, we study two variants of the problem: \emph{edge burning} and \emph{total burning}. We establish fundamental relationships between these variants and the classical burning, and we determine the computational complexity of the corresponding decision problems.
    \medskip
    
    \noindent \textbf{Keywords}: \textit{Burning Number; Graph burning;  Proper interval graphs; $P_k$-free graphs; Edge burning; Total burning}
    \medskip
		
\end{abstract}


\section{Introduction}
The spread of information, influence, and contagion through networks is a fundamental problem in network science. Understanding how rapidly such processes propagate has applications ranging from social media and viral marketing to epidemiology and communication networks. Examples include the spread of news, rumors, epidemics, and viral content on online social networks. The driving principle here is that a node can immediately influence only its acquaintances or neighbors. Although the initial influence comes from a single source node, gradually, additional source nodes emerge at different locations within the network to spread the influence. Consequently, the network's structure together with the sequence of nodes appearing over time are the factors influencing the rate of spread, making it essential to examine how this rate varies. A mathematical model called \emph{graph burning} was introduced~\cite{Bonato2016Burning} to analyze the rate of information spread over a network, where the spread of information is modeled as a fire spreading through the network. The discrete-time process of \emph{graph burning} is defined as follows: We are given an undirected graph $G$ with all vertices unburned. Each vertex is either burned or unburned. In every round (time step), we choose one unburned vertex to burn, and simultaneously, any unburned vertex adjacent to a vertex burned in the previous round also becomes burned. A vertex once burned cannot be unburned. The process ends when all vertices become burned. The \emph{burning number} of a graph $G$, denoted $b(G)$, is the minimum number of rounds for the process to end. Note that until the process ends, it is always possible to choose such an unburned vertex. A sequence of vertices $B=(b_1,b_2,\ldots,b_k)$, where $b_i$ denotes the vertex chosen in round $i$, is called a \emph{burning sequence} for $G$ of length $k$, and the vertices $b_1,b_2,\ldots,b_k$ are called the corresponding \emph{burning sources} for $G$. Alternatively, the burning number of $G$ is the length of a shortest burning sequence for $G$.
Now, the decision version of the problem is formally defined as follows.

\begin{mdframed}
    \textbf{\MakeUppercase{Burning Number Problem} (\BNP)}\\
    \textbf{Input:} An undirected graph $G$ with $n$ vertices and a positive integer $k$.\\
    \textbf{Question:} Is $b(G) \le k$? 
\end{mdframed}

Although graph burning was recently introduced, Alon~\cite{Alon1992TransmittingCube} studied a similar message-transmitting problem in a communication-theoretic setting and found that the burning number of an $n$-dimensional hypercube is $\ceil*{\frac{n}{2}}+1$. This line of research was further extended~\cite{Ho1996Transmitting,Jwo1994TransmittingDelay,Liu1996Routing}.

Clearly $b(G) \le d(G)+1$ for any connected graph $G$, where $d(G)$ is the diameter of $G$. This bound does not need to be tight. For example, for a path $P_n$ on $n$ vertices, $b(P_n)=\ceil*{\sqrt{n}~}$~\cite{Bonato2016Burning}, whereas $d(P_n)=n-1$. It was shown in~\cite{Bonato2016Burning} that every connected graph $G$ of order $n$ satisfies $b(G) \le 2\ceil*{\sqrt{n}~}-1$, and the authors conjectured the following stronger upper bound:

\begin{conjecture}[\cite{Bonato2016Burning}]\label{conj:BurningNumber}
    If $G$ is a connected graph of order $n$, then $b(G) \le \ceil*{\sqrt{n}~}$.
\end{conjecture}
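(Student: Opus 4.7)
The plan is to reduce the conjecture to the case of trees and then attack that case via a ball-covering reformulation. If $T$ is any spanning tree of a connected graph $G$, then distances in $G$ are dominated by distances in $T$, so any valid burning sequence for $T$ is valid for $G$ and hence $b(G) \le b(T)$. It therefore suffices to prove the conjecture for trees. A convenient reformulation is that a graph $H$ satisfies $b(H) \le k$ if and only if there exist vertices $v_1, \ldots, v_k$ with $V(H) = \bigcup_{i=1}^{k} N_{k-i}[v_i]$, where $N_r[v]$ denotes the closed $r$-ball around $v$. So for a tree $T$ on $n$ vertices and $k = \lceil \sqrt{n}\, \rceil$, one must exhibit centers whose balls of radii $k-1, k-2, \ldots, 0$ cover $V(T)$; on a path of length $n-1$ the total ball capacity is exactly $\sum_{i=0}^{k-1}(2i+1) = k^2 \ge n$, and the canonical cover uses centers at the path positions $k-1,\, 3k-3,\, 5k-5, \ldots$.

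I would then induct on $n$. Let $P = u_0 u_1 \cdots u_d$ be a longest path in $T$, and regard every off-path vertex as hanging in a subtree $T_j$ rooted at some $u_j$. Starting from the canonical path cover, the idea is to displace each center $u_j$ into $T_j$ whenever the gain in subtree coverage exceeds the loss in path coverage, and to invoke the induction hypothesis (or a direct counting argument) on any remaining connected piece after peeling off the balls already committed. The choice of $P$ as a \emph{longest} path is important, since it guarantees that every $T_j$ has depth at most $\min(j, d-j)$, which bounds how far a displaced center can reach before leaving a hole on the path.

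The main obstacle is the amortization. The radii $k-1, k-2, \ldots, 0$ are sharply decreasing, so spending a large radius on a small subtree, or a small radius on a region with many remaining vertices, is fatal; known tight families such as paths and caterpillars with slowly growing legs leave essentially no slack, and any center displaced off $P$ forces uncovered path vertices onto balls whose radii are already committed to other regions. I should be upfront that this conjecture remains open in general: the best published bounds are of the form $c \sqrt{n}$ with $c > 1$ strictly, so the above is a proof-strategy sketch rather than a complete argument, and the step where a local displacement is shown to preserve a global ball-capacity accounting is exactly the place where every attempt so far has broken down.
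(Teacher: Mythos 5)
The statement you were asked to prove is Conjecture~\ref{conj:BurningNumber}, which the paper itself presents as an \emph{open conjecture} due to Bonato et al.\ and does not prove; there is therefore no proof in the paper to compare against, and your explicit acknowledgement that the conjecture remains open is exactly right. The preliminary reductions you make are correct and standard folklore: since any spanning tree $T$ of a connected $G$ has $d_T(u,v)\ge d_G(u,v)$, a burning sequence for $T$ works for $G$ and hence $b(G)\le b(T)$, so it suffices to handle trees; and the ball-covering reformulation $V(H)=\bigcup_{i=1}^{k}N_H^{k-i}[b_i]$ is precisely the characterization the paper itself uses implicitly (e.g.\ in the proof of Theorem~\ref{thm:ProperNPC}). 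Your count $\sum_{i=0}^{k-1}(2i+1)=k^2\ge n$ for $k=\lceil\sqrt{n}\,\rceil$ correctly explains Theorem~\ref{thm:PathAttainsTheBound} for paths.

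The genuine gap is the one you name yourself: the displacement-and-amortize step. Moving a center $u_j$ off the longest path $P$ into a pendant subtree $T_j$ leaves a hole on $P$ of width up to twice the displacement, and since the radii $k-1,\dots,0$ are already fully committed by the counting identity $\sum(2i+1)=k^2$, there is no slack to absorb that hole when $n$ is close to $k^2$. No potential function or local exchange argument is currently known that closes this, which is why the best published unconditional bounds for trees are of the form $c\sqrt{n}+O(1)$ with $c>1$ rather than $\lceil\sqrt{n}\,\rceil$. You are correct to present this as a strategy sketch rather than a proof, and it would be a mistake to try to paper over the amortization step; doing so is exactly where purported proofs of this conjecture have historically broken down.
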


Conjecture~\ref{conj:BurningNumber}, commonly referred to as the \emph{burning number conjecture}~\cite{Bonato2016Burning}, has attracted considerable attention since its introduction. The general upper bound has been progressively improved over the years~\cite{Land2016BurningUpperBound, Bessy2018BoundsBurning, Mitsche2018Burning, DBLP:journals/jctb/NorinT24}, while stronger bounds have also been established for several special graph classes, including fence graphs~\cite{Bonato2021BurningFence}, spiders and path forests~\cite{Bonato2019BurningSpiderPathForest}, and caterpillars~\cite{Hiller2021BurningCaterpillar,Liu2020BurningCaterpillar}.

Since $b(G)\leq b(T)$ for every connected graph $G$ and any spanning tree $T$ of $G$~\cite{Bonato2016Burning}, it is sufficient to prove the conjecture for trees. Accordingly, the conjecture has been verified for several subclasses of trees, including spiders (trees with exactly one vertex of degree at least $3$)~\cite{Bonato2019BurningSpiderPathForest,Das2018BurningSpiders}, caterpillars~\cite{Hiller2021BurningCaterpillar}, and trees without degree $2$ vertices~\cite{DBLP:journals/gc/Murakami24}. Beyond trees, the burning number has also been investigated for several other graph classes, including Hamiltonian graphs~\cite{Bonato2016Burning}, grids~\cite{Gorain2023WBurning}, hypercubes~\cite{Mitsche2018Burning}, biconvex bipartite graphs~\cite{Antony2023SpanningCaterpillar}, and graph products~\cite{Mitsche2018Burning}.

The conjectured bound is attained by paths and cycles. Moreover, the proof of Theorem~\ref{thm:PathAttainsTheBound} yields an $O(n)$-time algorithm for constructing an optimal burning sequence of a path or cycle on $n$ vertices. Despite these advances, the conjecture remains open for general connected graphs.

\begin{theorem}[\cite{Bonato2016Burning}]\label{thm:PathAttainsTheBound}
    If $G = P_n$ or $G = C_n$, then $b(G) = \ceil*{\sqrt{n}~}$.
\end{theorem}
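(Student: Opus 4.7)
The plan is to prove matching upper and lower bounds of $\lceil \sqrt{n}\,\rceil$ for both $P_n$ and $C_n$. Throughout, write $k = \lceil \sqrt{n}\,\rceil$ so that $(k-1)^2 < n \le k^2$.

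For the upper bound $b(P_n) \le k$, I would exhibit an explicit burning sequence. The guiding principle is that a source $x_i$ chosen at step $i$ can burn at most a closed ball of radius $k-i$ around itself by the end of step $k$, and on a path such a ball contains at most $2(k-i)+1$ vertices. Since $\sum_{i=1}^{k}(2(k-i)+1) = k^2 \ge n$, I can partition $V(P_n) = \{v_1,\ldots,v_n\}$ into consecutive intervals $I_1,I_2,\ldots,I_k$ from left to right with $|I_i| \le 2(k-i)+1$ (taking equality whenever enough vertices remain), and set $x_i$ to be the central vertex of $I_i$. The balls of radius $k-i$ around these centers then cover $V(P_n)$, so every vertex is burned by step $k$.

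The nontrivial point to verify is that $x_i$ is still unburned when it is chosen; equivalently, $d(x_j, x_i) \ge i-j$ for every $j < i$. When all intervals have their full length $2(k-i)+1$, a direct computation of midpoint positions gives $d(x_j, x_i) = (i-j)(2k+1-i-j)$, which is at least $i-j$ precisely when $i+j \le 2k$, and this holds automatically since $j < i \le k$. The case $n < k^2$ requires shortening one of the intervals, and I expect this to be the main obstacle: I would argue that truncating only the \emph{first} interval (so that $x_1$ remains in a shorter, but still centrally-placed block) preserves every pairwise distance from the full-length analysis, because later intervals are unaffected and the only distances that change are $d(x_1, x_i)$, which only increase or stay the same after truncation.

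For the lower bound $b(P_n) \ge k$, I would use a covering argument. Any burning sequence of length $\ell$ produces closed balls $B[x_i, \ell-i]$ whose union is $V(P_n)$; on a path each such ball has at most $2(\ell-i)+1$ vertices. Substituting $\ell = k-1$ gives $n \le \sum_{i=1}^{k-1}(2((k-1)-i)+1) = (k-1)^2$, contradicting $n > (k-1)^2$.

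For $C_n$, the upper bound $b(C_n) \le b(P_n)$ follows from the standard monotonicity of $b$ under edge addition, since $P_n$ is a spanning subgraph of $C_n$. The lower bound is exactly the same ball-counting argument, because a ball of radius $r$ in a cycle also has at most $2r+1$ vertices. Finally, the $O(n)$-time algorithmic claim is immediate: compute $k = \lceil \sqrt{n}\,\rceil$ and output the $k$ midpoint indices using the closed-form position formula, each in constant time, for a total output of size $O(\sqrt{n})$.
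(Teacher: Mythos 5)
Your overall approach --- an explicit burning sequence for the upper bound, a ball-counting argument for the lower bound, and edge-deletion monotonicity plus the same counting for $C_n$ --- is the standard proof strategy; note that the paper you are working from does not reprove this theorem but cites it from \cite{Bonato2016Burning}. Your lower-bound counting for both $P_n$ and $C_n$ is correct, as is the reduction $b(C_n)\le b(P_n)$.

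The gap is in the upper bound for $P_n$ when $n<k^2$. You claim that after shortening $I_1$ and re-centering $x_1$ in the shorter block, ``the only distances that change are $d(x_1,x_i)$, which only increase or stay the same.'' This is the opposite of what happens. If $I_2,\dots,I_k$ occupy fixed (say right-aligned) positions and $I_1$ is the remaining left segment, of length $2k-1-s$ with $s=k^2-n$, then re-centering moves $x_1$ \emph{toward} $x_2,\dots,x_k$ by roughly $s/2$, so every $d(x_1,x_i)$ \emph{decreases}. The conclusion can still be rescued: the full-length distance $d(x_1,x_i)=(i-1)(2k-i)$ drops by at most $\lfloor s/2\rfloor$, and $(i-1)(2k-i-1)\ge s/2$ holds for all $2\le i\le k$ since $s\le 2k-2$. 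But your stated monotonicity claim is false, and you did not carry out the verification that it would have rendered unnecessary. A cleaner repair avoids re-centering altogether: lay out $I_2,\dots,I_k$ flush against the right end so their centers are $x_i=n-(k-i)(k-i+1)$, and set $x_1=\max(1,\,k-s)$. When $s<k$ every pairwise distance is literally unchanged from the $n=k^2$ computation; when $s\ge k$ one checks directly that $d(x_1,x_i)=x_i-1\ge i-1$, which reduces to $n\ge(k-i)^2+k$ and follows from $n>(k-1)^2$.
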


The computational complexity of the \BNP\ has also received considerable attention. Even for path forests (disjoint union of paths) and trees with maximum degree 3~\cite{Bessy2017BurningIsHard}, the problem is {\NPC}. Moreover, \BNP\ is {\NPC} even for a subclass of interval graphs, namely caterpillars with maximum degree~3~\cite{Liu2020BurningCaterpillar}. The problem is also extensively studied in the paradigms of parameterized algorithms~\cite{ashok2025burn,Janssen2020Burning,Kare2019ParamBurningAlgo,Kobayashi2022ParaComplexBurning} and approximation algorithms~\cite{Bessy2017BurningIsHard, Bonato2019ApprxAlgoBurning,Kamali2020BurningTwoWorlds,Mondal2022KBurningHardApprx}.

Graph burning is closely related to several propagation processes on graphs, including the \textit{firefighter problem}~\cite{Finbow2009Firefighter,Pralat2013SparseNotFlammable,Pralat2014GraphsBurnSlowly} and \textit{graph bootstrap percolation}~\cite{Adler2003Bootstrap,Balogh2012GraphPercolation,Bollobas1968Weakly}. Unlike graph burning, the firefighter problem seeks to prevent the spread of fire, whereas bootstrap percolation studies propagation under threshold rules. In addition, a variant of graph burning is also studied~\cite{Mitsche2017Burning,Roshanbin2016Burning}, where the burning sequence is chosen based on a probabilistic rule. For a general overview of recent findings on graph burning, see~\cite{Bonato2021BurningSurvey}.
 
Although two natural variants of graph burning, namely \emph{edge burning}~\cite{Mondal2022KBurningHardApprx} and \emph{total burning}~\cite{Moghbel2020TotalBurning}, have been studied in the literature, both have received comparatively less attention. In edge burning, we burn only the edges, and the fire spreads via neighboring edges (edges incident on a common vertex). On the other hand, in total burning, we burn both vertices and edges, and the fire spreads via both neighboring vertices and neighboring edges.

We now summarize our contributions to the graph burning problem and its variants.

\noindent\textbf{Our Results:}
The contributions of this paper are twofold. In particular, we address a few algorithmic and structural questions concerning graph burning and two of its variants. Our first contribution strengthens the existing hardness results for the \BNP.

\begin{itemize}
    \item For an \emph{interval graph} $G$ with diameter $d$, it is known that $\ceil{\sqrt{d+1}~} \le b(G) \le \ceil{\sqrt{d+1}~} +1$~\cite{Kare2019ParamBurningAlgo}. Since the \BNP\ is {\NPC} for connected interval graphs~\cite{Gorain2023WBurning}, it follows that even deciding whether $b(G)=\ceil{\sqrt{d+1}}$ or $b(G) = \ceil{\sqrt{d+1}~} +1$ is {\NPC} for an interval graph $G$. We strengthen this result by proving that the statement is true even for \emph{connected proper interval graphs}, which is a restricted subclass of interval graphs.
    
    \item Since the diameter of a $P_k$-free graph (graph without a path on $k$ vertices as an induced subgraph) $G$ with $n$ vertices is at most $k-2$, we have $b(G)\le k-1$. We substantially improve this bound by proving $b(G) \le \ceil*{\frac{k+1}{2}}$ for connected $P_k$-free graphs (Theorem~\ref{thm:PkFreeBound}) and thereby settling Conjecture~\ref{conj:BurningNumber} affirmatively for $P_k$-free graphs whenever $k < 2\ceil{\sqrt{n}~} - 1$.
\end{itemize}
    
Furthermore, we explore two natural variants of graph burning: edge burning and total burning. For a graph $G$, let $b_L(G)$ and $b_T(G)$ denote the minimum number of steps needed for the edge burning and total burning of $G$, respectively. Surprisingly, both variants can be interpreted as special cases of classical graph burning. To be specific, for any graph $G$, we have $b_L(G)=b(L(G))$ and $b_T(G)=b(T(G))$, where $L(G)$ and $T(G)$ are the \emph{line graph} and the \emph{total graph} of $G$, respectively. We further investigate the relationship between the classical burning number and these parameters $b_L(G)$ and $b_T(G)$, and derive several interesting consequences.

\begin{itemize}
    \item We prove that  $b(G) - 1 \le b_L(G) \le b(G) + 1$ (Theorem~\ref{thm:LineGraphBounds}). 
    Moreover, we improve the upper bound when $G$ is a tree $T$, i.e., we prove that $b_L(T) \le b(T)$ (Theorem~\ref{thm:LineGraphTreeBounds}). 
    
    \item We resolve a conjecture on the total burning problem~\cite{Moghbel2020TotalBurning}, on the relationship between $b_T(G)$ and $b(G)$ positively (Theorem~\ref{thm:TotalGraphBounds}), i.e., we prove that $b(G) \le b_T(G) \le b(G) + 1$.

    \item To the best of our knowledge, the algorithmic complexity of these two variants of burning is not known. In this paper, we prove that the \BNP\ is {\NPC} even for the \emph{line graphs of caterpillars} (Corollary~\ref{cor:Linecaterpillar}) and \emph{total graphs of bounded degree trees} (Theorem~\ref{thm:totalhard}).    
    These results not only give more insights into the complexity of the \BNP\ on the well-known graph classes, namely, line graphs and total graphs, but also imply that \emph{the edge burning problem} and \emph{the total burning problem} are {\NPC} for the respective graph classes. 
\end{itemize}
    
Both proper interval graphs and line graphs are two incomparable subclasses of \emph{claw-free} graphs (graphs without a claw or $K_{1,3}$ as an induced subgraph). The algorithmic complexity of the \BNP\ was unknown for claw-free graphs. Our results, namely, Theorem~\ref{thm:ProperNPC} and Corollary~\ref{cor:Linecaterpillar}, imply that the \BNP\ is {\NPC} even for these two incomparable subclasses of claw-free graphs.

\section{Preliminaries}\label{sec:Prelim}
All graphs considered in the paper are finite, simple, and undirected. For a graph $G$, the vertex set and the edge set are denoted by $V(G)$ and $E(G)$, respectively. Two vertices are called \emph{neighbors} if they have an edge between them. Similarly, two edges are called neighbors if they are incident on a common vertex. A vertex and an edge are neighbors if the edge is incident on the vertex. For $X \subseteq V(G)$ (resp. $X \subseteq E(G)$), the graph obtained from $G$ by removing the vertices (resp. edges) in $X$ is denoted by $G-X$. We refer to \cite{West2001IGT} for basic graph theoretic notations and definitions.

Let $u,v$ be two vertices in a graph $G$. The \textit{distance} between $u$ and $v$, denoted by $d_G(u,v)$, is the number of edges in a shortest path between $u$ and $v$ in $G$. When the graph $G$ is understood from the context, we denote the distance as $d(u,v)$. We call any path between $u$ and $v$ as \textit{$u$-$v$ path.} Let $k$ be a non-negative integer. We use $[k]$ to denote the set $\{1,2, \dots , k\}$. A path on $k$ vertices is denoted as $P_k$. The middle vertex of a path $P = P_{2k+1}$, denoted by $c(P)$, is the $(k+1)^{th}$ vertex from any endpoint of $P$. Note that $c(P)$ is the vertex that is equidistant from both endpoints in $P$. The \emph{diameter} of a graph $G$, $d(G) = max\{d_G(u,v): u,v \in V(G)\}$. For any vertex $v$ in a graph $G$, the \emph{$k^{th}$ closed neighborhood} of $v$ in $G$, denoted by $N_G^k[v]$, is the set $\{u\in V(G): d(u,v)\leq k\}$.

Let $B = (b_1, b_2, \dots , b_k)$ be a burning sequence of a given graph $G$. For any $i \in [k]$, the \emph{burning cluster} of $b_i$, denoted by $B_{c}(b_i)$, is the set of vertices in $N_G^{k-i}[b_i]$. Further, the set of vertices in $G$ that are burned in the $i^{th}$ step is denoted by $S_i(B)$. Similarly, the set of vertices in $G$ that are burned within the first $i$ steps (including the step $i$) is denoted by $S_i^-(B)$. Note that every vertex $v$ of $G$ must satisfy $d(v,b_i) \le k-i$ for some $i \in [k]$. Thus we have
\[V(G) = N_G^{k-1}[b_1] \cup N_G^{k-2}[b_2] \cup \dots \cup N_G^0[b_k].\]
Moreover, if $|B| = k = b(G)$, then $B$ is called an \emph{optimal burning sequence} for $G$. The following observation follows immediately from the proof of Theorem~\ref{thm:PathAttainsTheBound} provided in~\cite{Bonato2016Burning}.

\begin{observation}[\cite{Bonato2016Burning}]\label{obs:PathOptBurn}
    Let $n$ be a positive integer, and let $B=(b_1, b_2, \dots, b_n)$ be an optimal burning sequence for the path $P_{n^2}$. Then the burning clusters of $b_1, b_2, \dots, b_n$ are pairwise disjoint, and $B_{c}(b_i)$ contains exactly $2(n-i)+1$ vertices for every $i \in [n]$.
\end{observation}

Let $H$ be a subgraph of a graph $G$. Note that the burning number is not a monotone property with respect to subgraphs, i.e., it is not necessary that $b(H) \le b(G)$. For example, consider the cycle $C_{n-1}$ and the wheel $W_n$, which is obtained by adding a new vertex to $C_{n-1}$ and joining it to every vertex of the cycle. $b(C_{n-1}) = \ceil*{\sqrt{n-1}~}$, whereas $b(W_n) = 2$ for $n \ge 4$. Thus, although $C_{n-1}$ is a subgraph of $W_n$, we have $b(C_{n-1}) > b(W_n)$. If $d_H(x,y) = d_G(x,y)$ for every $x,y \in V(H)$, then $H$ is called an \emph{isometric subgraph} of $G$ \cite{Bonato2016Burning}, denoted by $H \le_{iso} G$. Moreover, the burning number is not monotonic even on isometric subgraphs. For example, $C_5 \le_{iso} W_6$, while $b(C_5) = 3 > 2 = b(W_6)$. The following theorem provides a condition on an isometric subgraph $H$ under which the burning number is monotonic.

\begin{theorem}[\cite{Bonato2016Burning}]\label{lem:BurnMonotonSub}
    Let $H \le_{iso} G$. If for every vertex $u \in V(G) \setminus V(H)$ and every positive integer $r$, there exists a vertex $u_r \in V(H)$ with $N_G^r[u] \cap V(H) \subseteq N_H^r [u_r]$, then $b(H) \le b(G)$.
\end{theorem}

We call an isometric subgraph $H$ of $G$ a \emph{good-monotonic subgraph} of $G$ if it satisfies the hypothesis of Theorem~\ref{lem:BurnMonotonSub}. Alternatively, Theorem~\ref{lem:BurnMonotonSub} says that if $H$ is a good-monotonic subgraph of $G$, then $b(H) \le b(G)$.

A \emph{caterpillar} is a tree in which the removal of all leaves results in a path, called a \emph{stem}. A \emph{star} is a graph of order $n$ with $n-1$ vertices of degree $1$ and one vertex of degree $n-1$.  A \emph{claw} is a star of order $4$. A \emph{spider} is a tree with exactly one vertex, called the branching vertex, of degree greater than two. A \emph{leg} of a spider is a path from the branching vertex to a leaf of the tree.  The \emph{line graph} of a graph $G$, denoted by $L(G)$, is the graph with the vertex set $E(G)$, where two vertices $x$ and $y$ are adjacent in $L(G)$ if and only if the corresponding edges $x$ and $y$ share a common endpoint in $G$. A vertex $v$ in a connected graph $G$ is said to be a \emph{cut vertex} if $G-v$ is not a connected graph. A \emph{block} is a maximal subgraph of a graph without any cut vertices.

\section{Proper Interval Graphs}
In this section, we establish the hardness result for connected proper interval graphs. 

In this section, we establish the hardness result for connected proper interval graphs. 

An \emph{interval representation} of a graph $G$ is a collection  $\{I_v\}_{v\in V(G)}$ of intervals on a real line such that for any pair of vertices $u,v\in V(G)$, we have $uv\in E(G)$ if and only if $I_u\cap I_v\neq \emptyset$. A graph is said to be an \emph{interval graph} if it has a corresponding interval representation. A \emph{proper interval graph} is an interval graph with an interval representation in which no interval is properly contained in another interval. It is well known that proper interval graphs are exactly the claw-free interval graphs \cite{RobertsIndiffGraphs1969}.

The \BNP\ is known to be \NPC\ for path forests~\cite{Bessy2017BurningIsHard}, a subclass of disconnected proper interval graphs. However, the hardness of the problem on a class of disconnected graphs does not necessarily imply the hardness on the corresponding connected graph class. For instance, the \BNP\ exhibits different algorithmic complexities on path forests and paths. Motivated by this distinction, we prove in Theorem~\ref{thm:ProperNPC} that the \BNP\ is \NPC\ even for connected proper interval graphs, by providing a reduction from the \textsc{Distinct 3-partition} (which is known to be {\NPC}~\cite{Garey1990TheoryNPComp}) problem.

\begin{mdframed}
    \textbf{\MakeUppercase{Distinct 3-partition} (D3P)}\\
    \textbf{Input:} A set of distinct natural numbers, $X = \{a_1, a_2, ..., a_{3n}\}$, such that $\sum_{i=1}^{3n} a_i = nB$ and $\frac{B}{4} < a_i < \frac{B}{2}$ for all $i \in [3n]$.\\
    \textbf{Question:} Does there exist a partition of $X$ into $n$ triples such that the sum of the numbers in each triple is $B$?
\end{mdframed}

The \DP{} is known to be strongly {\NPC} \cite{Garey1990TheoryNPComp, Hulett2008MultigraphRealizations}; that is, this problem is \NPC, even when restricted to the cases where $B$ is bounded above by a polynomial in $n$. We claim that the problem is also \NPC, even when the input is restricted to odd numbers.

\begin{lemma}
    The \DP{} is {\NPC} even when all the numbers in the input are odd.
\end{lemma}

\begin{proof}
    To prove this, we provide a reduction from the \DP{}, which is known to be {\NPC}. Let $X=\{a_1,a_2,\dots,a_{3n}\}$ be a \DP{} instance. Then the target sum is $B=\frac{1}{n}\cdot \sum_{i=1}^{3n}a_i$. Consider a new instance consisting only of odd natural numbers by defining $X'= \{a'_1,a'_2,\dots,a'_{3n}\} = \{2a_i+1:a_i \in X\}$. Note that the new target sum becomes $B' = 2B+3$, and since the numbers in $X$ are distinct, the numbers in $X'$ are also distinct. Further, for all $i \in [3n]$, since $\frac{B}{4} < a_i < \frac{B}{2}$, multiplying the inequality by $2$ and adding $1$ yields
    \[
    \frac{B}{2}+1 < 2a_i+1 < B+1.
    \]
    Observe that \[
    \frac{B'}{4}=\frac{2B+3}{4}
    =\frac{B}{2}+\frac{3}{4}
    < \frac{B}{2}+1
    \]
    and
    \[
    B+1 < B+\frac{3}{2}
    =\frac{2B+3}{2}
    =\frac{B'}{2}.
    \]
    Hence, we have
    \[
    \frac{B'}{4} < a'_i = 2a_i+1 < \frac{B'}{2}.
    \] 
    Therefore, $X'$ is also a \DP{} instance but restricted to odd numbers. Moreover, one can see that the transformation from $X$ to $X'$ can be done in polynomial time. Furthermore, for any triple $\{a_i,a_j,a_k\}$, we have the following.
    \begin{align*}
        & a'_i+a'_j+a'_k = (2a_i+1)+(2a_j+1)+(2a_k+1)=2(a_i+a_j+a_k)+3 \\
        \text{Thus, } & a_i+a_j+a_k = B \iff a'_i+a'_j+a'_k = 2B+3 = B'.
    \end{align*}
    Hence, $X$ is a YES-instance of \DP{} if and only if $X'$ is a YES-instance of \DP{}, as desired.
\end{proof}

The following proposition gives the bounds for the burning number of interval graphs.

\begin{proposition}[\cite{Kare2019ParamBurningAlgo}]\label{prop:interval}
    If $G$ is an interval graph with diameter $d$, then \[\ceil{\sqrt{d+1}~} \le b(G) \le \ceil{\sqrt{d+1}~} +1.\]
\end{proposition}

Since proper interval graphs are a subclass of interval graphs, the above bounds also hold for proper interval graphs. Proposition~\ref{prop:interval} follows from the fact that if $G$ is an interval graph with a diametral path $P$, then every vertex of $G$ either lies on $P$ or is adjacent to at least one vertex of $P$. Consequently, the burning number of $G$ is either $b(P)$ or $b(P)+1$. Hence, we establish in Theorem~\ref{thm:ProperNPC} that determining whether $b(G)=b(P)$ is {\NPC} even when $G$ is a connected proper interval graph.

\noindent\textbf{High-Level Description of the Reduction:} We reduce from the \DP{} restricted to odd integers. Given an instance $X$, we construct a proper interval graph $G_P$ using Construction~\ref{cons:proper}, which is obtained by modifying the ``comb'' structure used in the NP-completeness proof for interval graphs in \cite{Gorain2023WBurning}.

The restriction to odd integers is crucial because the reduction is based on the structural property that a path on $(2m+1)^2$ vertices can be burned optimally in $2m+1$ steps, and such a burning partitions the path into $2m+1$ disjoint burning clusters whose sizes are exactly the first $2m+1$ odd integers. This property allows us to encode the elements of $X$ as lengths of suitable subpaths of the path.

We first construct a path $P$ composed of several subpaths arranged so that their lengths correspond to the required odd integers. The subpaths corresponding to the elements of $X$ represent potential triples in a solution to the \DP{} instance, while the remaining subpaths account for the other odd integers. Intuitively, an optimal burning of $P$ corresponds to selecting triples of elements from $X$ whose sums equal the target sum in the \DP{} instance. We then modify certain subpaths of $P$ as in Construction~\ref{cons:proper}, producing a connected proper interval graph $G_P$. In the subsequent analysis, we show that $G_P$ can be burned in $2m+1$ steps if and only if $X$ is a YES-instance of \DP{}.

We now describe the construction that transforms an instance $X$ of \DP{} restricted to odd numbers into a connected proper interval graph $G_P$.

\begin{construction}
    \label{cons:proper}
    Let $X=\{a_1,a_2,\dots,a_{3n}\}$ be an arbitrary instance of the \DP{} restricted to odd numbers. Then $n=\frac{|X|}{3}$, $B=\frac{1}{n}\cdot \sum_{i=1}^{3n}a_i$, and each $a_i = 2x_i-1$ (i.e., $x_i^{th}$ odd number) for some corresponding natural number $x_i$. Let $m$ be such that the $m^{th}$ odd number (i.e., $2m-1$) is the maximum element in $X$, and let $k=m-3n$. Let $Z$ be the first $m$ odd natural numbers, i.e., $Z = \{1,3,\dots,2m-1\}$. Let $Y=Z\setminus X$. So, $|Y|=k$. We construct a proper interval graph $G_P$ from $X$ as follows:
    \begin{itemize}
        \item Introduce $n$ paths $S_1,S_2,\dots, S_n$ each with $B$ vertices, and $k$ paths $S_1',S_2',$ $\dots, S_k'$ such that each path $S_i'$ is of order $y_i$, where $y_i$ is the $i^{th}$ largest number in $Y$. Now, introduce another $m+1$ paths $Q_1,Q_2,\dots,Q_{m+1}$ such that each path $Q_j$ is of order $2(2m+1-j)+1$. Note that the orders of the $Q_j$'s are all odd numbers from $2m+1$ to $4m+1$. Further, construct a larger path $P$ by joining these paths $S_i$, $S_j'$, and $Q_l$, for $1\le i\le n$, $1\le j\le k$, and $1\le l\le m+1$, in the following order:
        \[S_1,Q_1,S_2,Q_2,\dots, S_n,Q_n,S_1',Q_{n+1},S_2',Q_{n+2},\dots,S_k', Q_{n+k}, Q_{n+k+1},\dots,Q_{m+1}.\]
        Note that the total number of vertices in $S_i$'s and $S_i'$'s is $nB = \sum_{i=1}^{3n}a_i$ and $\sum_{i=1}^ky_i$, respectively, which collectively is the sum of all the odd numbers in $[2m-1]$. Further, since the orders of the $Q_j$'s are all odd numbers from $2m+1$ to $4m+1$, we have: \[|P|=nB+\sum_{i=1}^ky_i+\sum_{j=1}^{m+1}(2(2m+1-j)+1)=(2m+1)^2.\]
       
       \item For each subpath $Q_i$, $1\le i\le m+1$ in $P$, introduce $i'-1$ vertices denoted by $q_{i1},q_{i2},\dots,q_{i(i'-1)}$, where $i' = |Q_i|$. Now, the adjacency between vertices in $Q_i$ and the new $i'-1$ vertices is defined as follows: for $1\le x\le |Q_i|$, the $x^{th}$ and the $(x+1)^{th}$ vertices of $Q_i$ are adjacent to the vertex $q_{ix}$. The subgraph obtained after this modification to the subpath $Q_i$ is denoted by $Q_i^P$ and is a subgraph of $G_P$, i.e., $Q_i^P = G_P[V(Q_i) \cup \{q_{i1},q_{i2},\dots,q_{i(i'-1)}\}]$.
   \end{itemize}
   This completes the construction. See Fig.~\ref{fig:proper} for an example.
\end{construction}

\begin{figure}[t]
    \input{Sections/figs/propereg}
    \caption{An example of Construction~\ref{cons:proper} for the \DP{} instance $X=~\{19,21,23,27,29,31\}$ with $m=16$. All horizontal lines collectively represent the induced path $P$ on $G_P$. For $1\le i\le m+1$, $1\le j\le i'-1$, and $i'=|Q_i|$, $q_{ij}$'s are the hanging vertices adjacent to the vertices on $P$. Note that $|S_1| = |S_2| = 75$, $|S'_1| = 1$, $|S'_2| = 3, \cdots |S'_9| = 17$, $|S'_{10}| = 25$, $|Q_1| = 33$, $|Q_2| = 35, \cdots |Q_{17}| = 65$.}
    \label{fig:proper}
\end{figure}

The following lemmas and observations based on Construction~\ref{cons:proper} are useful to prove Theorem~\ref{thm:ProperNPC}.

\begin{lemma}
    Transformation from an instance $X$ of the \DP{} restricted to odd numbers to the corresponding instance of the \BNP\ on a proper interval graph runs in polynomial time.
\end{lemma}

\begin{proof}
    Since \DP{} is strongly {\NPC}, the numerical values appearing in the instance $X$, specifically the odd integers $a_i$ and the target sum $B$, are bounded by a polynomial in the input size.
    The construction of the proper interval graph produces graph components whose sizes are linear in the values $a_i$ and $B$. Since these values are polynomially bounded, the total size of the constructed graph and the time required to build it are polynomial in the numerical parameters of the instance. Consequently, the overall transformation runs in polynomial time.
\end{proof}

\begin{observation}
    \label{obs:G_PBurningLowerBound}
    $b(G_P) \ge 2m+1$.
\end{observation}

\begin{proof}
    For every $x,y \in V(P)$, we have $d_P(x,y) = d_{G_P}(x,y)$, and for every vertex $u \in V(G_P) \setminus V(P)$ (precisely $q_{ij}$'s) and every positive integer $r$, there exists a vertex $u_r \in V(P)$ (a neighbor of $q_{ij}$) such that $N_{G_P}^r[u] \cap V(P) \subseteq N_P^r [u_r]$. Hence, $P$ is a good-monotonic subgraph of $G_P$. Thus, by Lemma~\ref{lem:BurnMonotonSub}, $b(P) \le b(G_P)$. Furthermore, $b(P) = 2m+1$ since $P = P_{(2m+1)^2}$, which implies the claim.
\end{proof}


\begin{observation}
\label{obs:optimalburn_proper_path}
    While burning the graph $G_P$, if a single burning source can burn a subpath $Q_i$ of $P$, for some $i \in [m+1]$, in $t$ steps, then the same source can also burn $Q_i^P$ in $t$ steps.
\end{observation}

\begin{proof}
    Suppose that while burning $G_P$, a single burning source $x \in V(G_P)$ burns $Q_i$ in $t$ steps for some $i \in [m+1]$. Then every vertex $u \in V(Q_i)$ satisfies ${d}(x,u) \le t-1$. Let $v$ be an arbitrary vertex in $V(Q_i^P) \setminus V(Q_i)$. If we show that ${d}(x,v) \le t-1$, then $x$ can also burn the vertex $v$ in $t$ steps, which in turn implies that $x$ can also burn all the vertices of $Q_i^P$ in $t$ steps. If $v = x$, then trivially, we have ${d}(x,x) = 0 \le t-1$, as desired. Hence, we have $v \neq x$. By Construction~\ref{cons:proper}, the vertex $v$ is adjacent to two vertices $u_1,u_2 \in V(Q_i)$, which are adjacent to each other. Thus, we have ${d}(x,u_1) \le t-1$, ${d}(x,u_2) \le t-1$, and ${d}(x,v) \le \min\{{d}(x,u_1),{d}(x,u_2)\} + 1$. Since $v \neq x$ and $v$ is the only vertex that is equidistant from $u_1$ and $u_2$, it is not possible to have ${d}(x,u_1) = {d}(x,u_2) = t-1$. Without loss of generality, we have ${d}(x,u_1) \le t-2$, implying that ${d}(x,v) \le t-1$, as desired.
\end{proof}

\begin{lemma}
    \label{lem:bcluster}
    If all the burning sources of a burning sequence for $G_P$ lie on the path $P$ with at least two burning sources on a subpath $Q_i$ for some $i \in [m+1]$, then the burning clusters corresponding to at least two of these burning sources have a nonempty intersection within $Q_i$.
\end{lemma}

\begin{proof}
    Suppose that all the burning sources of a burning sequence $B$ for $G_P$ lie on the path $P$, and for some $i \in [m+1]$, $Q_i$ contains at least two burning sources, say $u_1$ and $u_2$. In addition, $u_1$ and $u_2$ are chosen such that no other burning sources lie on the $u_1-u_2$ path in $Q_i$ (and hence in $G_P$). Suppose, for contradiction, that $B_c(u_1) \cap B_c(u_2) \cap V(Q_i) = \emptyset$. Let $v$ be the vertex of the path $Q_i$ in $B_c(u_1)$ that is closest to $u_2$, and let $v'$ be the vertex of $Q_i$ in $B_c(u_2)$ that is closest to $u_1$. Since $B$ is a burning sequence for $G_P$ and there are no other burning sources on the $u_1-u_2$ path in $Q_i$, we have $v \ne v'$ and $vv' \in E(Q_i)$. Let $v_s$ be the common neighbor of $v$ and $v'$ in $Q_i^P$ as shown in Fig.~\ref{fig:proper_sub}. Since $B_c(u_1) \cap B_c(u_2) \cap V(Q_i) = \emptyset$, we have $v \notin B_c(u_2)$ and $v' \notin B_c(u_1)$. Therefore, $v_s \notin B_c(u_1) \cup B_c(u_2)$, implying that the vertex $v_s$ is unburned in $G^P$, a contradiction since $B$ is a burning sequence for $G_P$. Consequently, the lemma holds.
\end{proof}
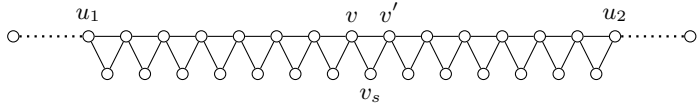
\begin{figure}[t]
    \tikzstyle{filled vertex}  = [{circle,draw=blue,fill=black!50,inner sep=1.5pt}]  
\tikzstyle{empty vertex}  = [{circle, draw, fill = white, inner sep=1.5pt}]
  \centering \small
\begin{tikzpicture}[scale=.5] 
{
      \def\n{4}
      \def\radius{3}      

      \node[empty vertex] (u) at  (-2,0){};
      \node[empty vertex,,label=above:\small{$u_1$}] (v1) at  (0,0){};
      \node[empty vertex] (v2) at  (1,0){};
      \node[empty vertex] (v3) at  (2,0){};
      \node[empty vertex] (v4) at  (3,0){};
      \node[empty vertex] (v5) at  (4,0){};
      \node[empty vertex] (v6) at  (5,0){};
      \node[empty vertex] (v7) at  (6,0){};
      \node[empty vertex,label=above:\small{$v$}] (v8)  at  (7,0){};
      \node[empty vertex,label=above:\small{$v'$}] (v9) at  (8,0){};
      \node[empty vertex] (v10) at  (9,0){};
      \node[empty vertex] (v11) at  (10,0){};
      \node[empty vertex] (v12) at  (11,0){};
      \node[empty vertex] (v13) at  (12,0){};
      \node[empty vertex] (v14) at  (13,0){};
      \node[empty vertex,,label=above:\small{$u_2$}] (v15) at  (14,0){};
      \node[empty vertex] (v) at  (16,0){};

      \node[empty vertex] (v1') at  (0.5,-1){};
      \node[empty vertex] (v2') at  (1.5,-1){};
      \node[empty vertex] (v3') at  (2.5,-1){};
      \node[empty vertex] (v4') at  (3.5,-1){};
      \node[empty vertex] (v5') at  (4.5,-1){};
      \node[empty vertex] (v6') at  (5.5,-1){};
      \node[empty vertex] (v7') at  (6.5,-1){};
      \node[empty vertex,label=below:\small{$v_s$}] (v8') at  (7.5,-1){};
      \node[empty vertex] (v9') at  (8.5,-1){};
      \node[empty vertex] (v10') at  (9.5,-1){};
      \node[empty vertex] (v11') at  (10.5,-1){};
      \node[empty vertex] (v12') at  (11.5,-1){};
      \node[empty vertex] (v13') at  (12.5,-1){};
      \node[empty vertex] (v14') at  (13.5,-1){};

      \foreach \i in {1,..., 14} {
        \pgfmathsetmacro{\p}{int(\i + 1)}
            \draw (v\i) -- (v\p);
        }

     \foreach \i in {1,..., 14} {
        \pgfmathsetmacro{\p}{int(\i + 1)}
            \draw (v\i) -- (v\i');
            \draw (v\i') -- (v\p);
        }
      
    \draw[dotted, line width=0.3mm] (u) -- (v1);
    \draw[dotted, line width=0.3mm] (v15) -- (v);

   }   
\end{tikzpicture}
    \caption{Structure of a $Q_i^P$ with 15 vertices in $Q_i$. The dashed line represents the subpaths that are connected to $Q_i^P$ on both ends.}
    \label{fig:proper_sub}
\end{figure}

In addition, we have the following two lemmas based on Construction~\ref{cons:proper} which imply Theorem~\ref{thm:ProperNPC}.

\begin{lemma}
    \label{lem:properinteval_onlyif}
    Let $X$ be an instance of the \DP{}. Let $G_P$ be the graph obtained from $X$ using Construction~\ref{cons:proper}. If the burning number of $G_P$ is $2m+1$, where the $m^{th}$ odd number is the maximum element in $X$, then there exists a partition of $X$ into triples such that each triple sums to $B$.
\end{lemma}
\begin{proof}
    Assume that $b(G_P)=2m+1$ and $B_s=(b_1,b_2,\dots,b_{2m+1})$ is an optimal burning sequence for $G_P$. First, we have the following claims.
    \renewcommand\qed{$\hfill\square$}

    \begin{claim}
        \label{clm:burn_source_onpath}
        Each burning source in $B_s$ should be on the path $P$.
    \end{claim}
    
    \begin{proof}
        Suppose, for contradiction, that $b_i$ in $B_s$ lies on $Q_j^P - Q_j$ for some $i \in [2m+1]$ and $j \in [m+1]$. Then the subgraph induced by the vertices in $(N_{G_P}^{2m+1-i}[b_i]) \cap V(P)$ contains less than $2(2m+1-i)+1$ vertices. Then, by Observation~\ref{obs:PathOptBurn}, the subgraph induced by $\bigcup_{i=1}^{2m+1}[(N_{G_P}^{2m+1-i}[b_i]) \cap V(P)]$ has less than $(2m+1)^2$ vertices, implying that the subgraph $P$ of $G_P$ is not completely burned, a contradiction since $B_s$ is a burning sequence for $G_P$.
    \end{proof}

    \begin{claim}
        \label{clm:OneSourceInQ_i}
        For all $i \in [m+1]$, the subpath $Q_i$ contains exactly one burning source. Moreover, that source precisely is $c(Q_i)$, i.e., $b_i = c(Q_i)$, where $c(Q_i)$ is the middle vertex of $Q_i$.
    \end{claim}

    \begin{proof}
        Assume, for contradiction, that the subpath $Q_i$ contains at least two burning sources for some $i \in [m+1]$. By Claim~\ref{clm:burn_source_onpath}, all the burning sources in $B_s$ are on the path $P$. Then, by Lemma~\ref{lem:bcluster}, the burning clusters corresponding to at least two of these burning sources have a nonempty intersection within $Q_i$. Hence, it follows from Observation~\ref{obs:PathOptBurn} that $b(G_P) > 2m+1$, a contradiction. Thus, $Q_i$ contains exactly one burning source, as desired.
        
        Furthermore, since $c(Q_i)$ is the only vertex of $Q_i$ that is at a distance at most $2m+1-i$ from both endpoints of $Q_i$, and by Observation~\ref{obs:optimalburn_proper_path}, we have $b_i = c(Q_i)$, as claimed.
    \end{proof}
    \renewcommand\qed{$\hfill\blacksquare$}
    
    Now, consider $P'= G_P- \cup_{i=1}^{m+1} V(Q_i^P)$. Note that $P'$ is a disjoint union of paths $S_1,S_2, \dots , S_n$ and $S'_1,S'_2, \dots , S'_k$. By Claim~\ref{clm:OneSourceInQ_i}, the burning sequence $B_s' = (b_{m+2},b_{m+3}\ldots, b_{2m+1})$ of length $m$, should burn $P'$. Since $P'$ is a path forest of order $m^2$, this implies that for each $i \in \{m+2,m+3, \dots , 2m+1\}$, the subgraph induced by the vertices in $N_{G_P}^{2m+1-i}[b_i]$ is a path of order $2(2m+1-i)+1$, as $B_s$ is an optimal burning sequence. In other words, the corresponding burning clusters are the paths of order $2m-1,2m-3, \dots , 3,1$. Hence, there exists a partition of $P'$ induced by $B_s'$, with a one-to-one correspondence between the orders of the subpaths in the partition and the elements of $Z$.

    Note that each subpath $S'_i$, for $1\le i \le k$, should contain at least one burning source. Hence, all $S'_i$'s together require at least $k$ burning sources. Since $k = m-3n$, we are left with at most $3n$ burning sources for burning the subpaths $S_j$'s, for $1 \le j \le n$. Note that each $|S_j|=B$, which is odd. Moreover, each burning cluster corresponding to a vertex of $P'$ has size at most $2m-1$. Since $2m-1 < \frac{B}{2}$, the combined size of any two such clusters is strictly less than $B$. Consequently, covering all $B$ vertices of each subpath $S_j$ requires at least three distinct burning clusters. As at most $3n$ burning sources remain, the pigeonhole principle implies that each $S_j$, for $1 \le j \le n$, contains exactly three burning sources. It follows that each $S'_i$, for $1 \le i \le k$, contains exactly one burning source. Since the burning clusters in $P'$ form a partition whose sizes are precisely the elements of $Z=\{1,3,\dots,2m-1\}$, and since the subpaths $S'_i$ correspond exactly to the elements of $Y=Z\setminus X$, the remaining burning clusters assigned to the subpaths $S_j$ must have sizes given precisely by the elements of $X$. Therefore, for each $S_j$, the three burning clusters covering it have sizes equal to the distinct elements of $X$ whose sum is $B$. This establishes a distinct 3-partition of $X$, as desired.
\end{proof}

\begin{lemma}
    \label{lem:properinteval_if}
    Let $X$ be an instance of the \DP{}. Let $G_P$ be the graph obtained from $X$ using Construction~\ref{cons:proper}. If there exists a partition of $X$ into triples such that each triple sums to $B$, then the burning number of $G_P$ is $2m+1$, where the $m^{th}$ odd number is the maximum element in $X$.
\end{lemma}

\begin{proof}
    Assume that there exists a partition of $X$ into triples, $T_1,T_2, \dots , T_n$ (say, each $T_i = \{t_i^1,t_i^2,t_i^3\}$) such that each triple sums to $B$, i.e., for each $i \in [n]$, we have $t_i^1 + t_i^2 + t_i^3 = B$. We also have $P= (\bigcup_{i=1}^nS_i)\cup (\bigcup_{j=1}^kS_j') \cup (\bigcup_{l=1}^{m+1}Q_l)$. Note that for each $i \in [n]$, since the subpath $S_i$ is of order $B$, we can partition $S_i$ into three subpaths $P_{t_i^1}, P_{t_i^2}, P_{t_i^3}$ (note that $P_r$ is a path on $r$ vertices). It is easy to see that in this partition of the subpaths $S_1,S_2,\dots,S_n$ of $P$, there is a one-to-one correspondence between the orders of the subpaths in the partition and the elements of $X$. Moreover, since the orders of the $Q_j$'s are all the odd numbers from $2m+1$ to $4m+1$ and $Z=X \cup Y$, we now have a partition of $P$ with a one-to-one correspondence between the orders of the subpaths in the partition and the first $2m+1$ odd numbers. Let $R_i$ be the $i^{th}$ largest subpath of this partition of $P$. Then, one can see that $V(P)=\bigcup_{i=1}^{2m+1}N_P^{2m+1-i}[c(R_i)]$, which implies by Observation~\ref{obs:optimalburn_proper_path} that $V(G_P)=\bigcup_{i=1}^{2m+1}N_{G_P}^{2m+1-i}[c(R_i)]$. Thus, by Observation~\ref{obs:G_PBurningLowerBound}, we have $b(G_P)= 2m+1$, as desired.
\end{proof}

Since the \DP{} is \NPC{}, we then have the following theorem due to Lemmas~\ref{lem:properinteval_onlyif} and~\ref{lem:properinteval_if}. 

\begin{theorem}\label{thm:ProperNPC}
   The \BNP\ is \NPC\ for connected proper interval graphs.
\end{theorem}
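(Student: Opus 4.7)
The plan is to reduce from the \textbf{distinct 3-partition} problem, which is {\NPC}~\cite{Garey1990TheoryNPComp}. Given $3m$ distinct positive integers $a_1<\cdots<a_{3m}$ with $\sum_i a_i=mB$ and $B/4<a_i<B/2$, the goal is to produce, in polynomial time, a connected proper interval graph $G$ and an integer $k$ such that $b(G)\le k$ if and only if the integers can be partitioned into $m$ triples each summing to $B$.

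The construction I propose is a \emph{thickened path}. I would begin with a long spine path and place $3m$ gadget positions $p_1,\ldots,p_{3m}$ along it in order, with inter-gadget spacings chosen so that the consecutive stretch corresponding to any triple in a valid partition has total length exactly $B$ after a fixed scaling. At each $p_i$ I would attach a small local \emph{bump} whose geometry encodes the value $a_i$; a natural choice is a short thickening (a clique-substitution of a spine vertex) of width controlled by $a_i$. The overall vertex count is padded so that $n=k^2$ for a suitable integer $k$ with $\lceil\sqrt{d+1}\rceil=k$, matching the interval-graph bounds of~\cite{Kare2019ParamBurningAlgo}. To certify that $G$ is proper interval, I would exhibit an explicit linear ordering of $V(G)$ in which the closed neighbourhood of every vertex is an interval; the bumps are shaped precisely to avoid any induced $K_{1,3}$, since each added vertex sees a contiguous window on the spine and on its sibling bump-vertices.

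For correctness, the forward direction is direct: from a valid partition $S_1,\ldots,S_m$, place fire source $j$ near the centre of the spine stretch associated with $S_j$, and order the sources so that, at termination, the ball of radius $k-j$ around source $j$ covers exactly that stretch, mirroring the optimal burning of $P_{k^2}$ from Theorem~\ref{thm:PathAttainsTheBound}. For the converse, any burning sequence of size $k$ on a graph with $n=k^2$ and diameter $d=k^2-1$ must be ``perfectly packed'': the balls around the sources tile the spine into $k$ consecutive blocks of prescribed lengths. Together with the restriction $B/4<a_i<B/2$, which forbids blocks containing fewer than three or more than three gadgets, this forces each block to contain exactly three gadgets whose values sum to exactly $B$, yielding the desired 3-partition.

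The principal obstacle is calibrating the gadgets. Each bump must be heavy enough that a fire source covering a block forces the bump values inside it to sum to the block's budget --- so that the arithmetic of distinct 3-partition is faithfully captured by burning radii --- yet light enough, and shaped so, that $G$ remains a proper interval graph with the diameter, vertex count, and closed-neighbourhood radii needed to make the perfect-packing argument tight. Ruling out ``cheating'' burning sequences that shift sources away from the intended centres or exploit the bumps to reach additional spine vertices for free, while simultaneously respecting the $K_{1,3}$-free constraint imposed by proper interval representability, is where the main technical effort will go.
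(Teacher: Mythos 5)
Your choice of source problem (distinct 3-partition) and your overall shape (a spine path tiled perfectly by burning clusters, with gadgets that force the arithmetic) match the paper's strategy, but the proposal stops exactly at the point where the paper's real work lives, and the gadget design you sketch differs from the paper's in a way that is not obviously repairable.

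First, the paper does \emph{not} encode the integers $a_i$ as ``bump weights'' attached at positions on the spine. It encodes them purely as \emph{lengths of subpaths}: after transforming $X$ to $X'=\{2a_i-1\}$ so that every value is odd, it builds a spine $P$ of exactly $(2m+1)^2$ vertices by concatenating arithmetic subpaths $S_i$ (one per triple, of length $B'$), slack subpaths $S'_j$ realizing the unused odd lengths in $Z\setminus X'$, and separator subpaths $Q_j$ of the $m+1$ largest odd lengths $4m+1,4m-1,\dots,2m+1$. The odd-number transformation is not cosmetic: burning clusters of an optimal source set on a path of order $k^2$ have sizes $1,3,\dots,2k-1$, which are all odd and tile $k^2$ exactly, and the parity constraint is what forbids two sources from jointly covering an $S_i$ and (together with $B'>2m-1$ and pigeonhole) forces exactly three sources per $S_i$. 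Your proposal never mentions this parity device, and without it the ``blocks of three'' conclusion in your converse does not follow.

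Second, and more seriously, you identify ``ruling out cheating sequences'' as the main technical effort but leave it unaddressed. The paper's mechanism is specific: only the separator paths $Q_j$ are thickened, by adding one parallel vertex between each consecutive pair of spine vertices of $Q_j$ (so $Q_j^p$ looks like a ladder triangulation, still proper interval, still claw-free). Lemma~\ref{lem:bcluster} then shows that two sources inside one $Q_j^p$ necessarily have overlapping clusters, wasting budget; combined with the fact that a path of order $(2m+1)^2$ burned in $2m+1$ steps must have pairwise disjoint clusters, this \emph{forces} exactly one source per $Q_j$, placed at its centre (Claims~\ref{clm:properburnsource} and \ref{clm:properburncenter}). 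That rigidity is what makes the remaining $m$ sources, with cluster sizes $1,3,\dots,2m-1$, tile the $S_i$'s and $S'_j$'s and recover the 3-partition. Your ``clique-substitution of a spine vertex of width $a_i$'' is a different gadget, placed on the arithmetic positions rather than on the separators, and it is unclear how it would force source placement: thickening a single vertex does not change spine distances, so a source can sit anywhere in the clique with the same effective radius, and the cluster-disjointness argument for the converse is not established. Finally, note that the paper does \emph{not} arrange $n=k^2$ for the whole graph $G_p$ (only the spine has $(2m+1)^2$ vertices; the ladder vertices are extra and get absorbed inside the $Q_j$ clusters), so insisting on $n=k^2$ is an unnecessary constraint that your padding would have to fight against while preserving properness.
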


\noindent\textbf{Complexity Implication for Line Graphs:} The above reduction has an interesting implication on the complexity of the \BNP\ on the class of line graphs. Let $\mathcal{G}$ be the class of caterpillars with degree at most $3$.
Let $\mathcal{G}_L$ be the class of the line graphs of graphs in $\mathcal{G}$.  
One can see that each connected proper interval graph $G_P$ of order $q$ obtained using Construction~\ref{cons:proper} belongs to $\mathcal{G}_L$ ($G_P \in \mathcal{G}_L$); i.e., $G_P$ corresponds to the line graph of a caterpillar $G \in \mathcal{G}$ of order $q+1$ (in which the diametral path of $G$ has exactly one more vertex than the central path $P$ of $G_P$), and for each $Q_\ell^P$ in $G_P$, the edges corresponding to each vertex $q_{ij}$ are due to a corresponding leaf edge in the caterpillar $G$, being incident on a unique vertex in the stem of $G$. Thus, we have the following corollary of Theorem~\ref{thm:ProperNPC}.

\begin{corollary}\label{cor:Linecaterpillar}
    The \BNP\ is {\NPC} on line graphs of caterpillars with degree at most $3$.
\end{corollary}

\section{$P_k$-free Graphs}
A graph $G$ is said to be $P_k$-free for some positive integer $k$ if $G$ does not contain $P_k$ (a path on $k$ vertices) as an induced subgraph. Recall that paths and cycles are two graph classes for which the burning number attains the maximum value proposed by Conjecture~\ref{conj:BurningNumber}. This motivated us to explore the burning on graphs that do not contain paths of \emph{``specific length''} as induced subgraphs, i.e., $P_k$-free graphs, where $k \ge 2$. Theorem~\ref{thm:PkFreeBound} gives an upper bound for the burning number of connected $P_k$-free graphs and, thereby, settles Conjecture~\ref{conj:BurningNumber} affirmatively for $P_k$-free graphs for any integer $k < 2\ceil{\sqrt{n}~} - 1$. We first note the following structural result on connected $P_k$-free graphs from~\cite{Camby2016P_kFreeChar}, and then utilize it to prove Theorem~\ref{thm:PkFreeBound}.

\begin{theorem}[\cite{Camby2016P_kFreeChar}]\label{thm:PkFreeStructureCamby}
    Let $G$ be a connected $P_k$-free graph of order $n$ with $k \ge 4$, and let $D$ be a minimum connected dominating set of $G$. Then $G[D]$ is either $P_{k-2}$-free or isomorphic to $P_{k-2}$. Further, such a connected dominating set can be obtained in $O(n^7)$ time.
\end{theorem}

\begin{theorem}\label{thm:PkFreeBound}
    Let $G$ be a connected $P_k$-free graph with $k \ge 2$. Then, $b(G) \le \ceil*{\frac{k+1}{2}}$.
\end{theorem}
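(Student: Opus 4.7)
The plan is to establish the stronger statement $r(G) \le \lceil (k-1)/2 \rceil$ for every connected $P_k$-free graph $G$, from which the theorem follows via the folklore bound $b(G) \le r(G) + 1$. (To see this latter bound, pick a vertex $v_1$ of minimum eccentricity $r$, consider its BFS layers $L_0, L_1, \ldots, L_r$, and set $v_i \in L_{i-1}$ for $i = 1, \ldots, r+1$; each such $v_i$ is unburned when selected because $d(v_i, v_j) \ge i - j$ by the triangle inequality, and the fire from $v_1$ covers $V(G) = B_r(v_1)$ by round $r + 1$.)

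I would then pick a longest induced path $P = u_0 u_1 \cdots u_\ell$ in $G$ and set $m = u_{\lfloor \ell/2 \rfloor}$. Since shortest paths are induced, $P_k$-freeness forces $\ell \le k - 2$, so for any $v \in V(P)$ one immediately has $d(m, v) \le \lceil \ell/2 \rceil \le \lceil (k-1)/2 \rceil$. For $v \notin V(P)$, I argue by contradiction: assuming $d(m, v) \ge \lceil (k+1)/2 \rceil$, I pick a shortest (hence induced) $m$-$v$ path $Q = q_0 q_1 \cdots q_s$ with $q_0 = m$ and $q_s = v$, and splice it with whichever half of $P$ (say from $u_0$ to $m$) is longer, of length $\lceil \ell/2 \rceil$. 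The spliced walk from $u_0$ to $v$ has length at least $\lceil \ell/2 \rceil + \lceil (k+1)/2 \rceil$, which strictly exceeds $\ell$; if this walk is an induced path, it is a longer induced path than $P$, contradicting the maximality of $P$.

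The main obstacle is verifying that the spliced walk is indeed induced. A chord $u_i q_j$ (with $0 < i < \lfloor \ell/2 \rfloor$ and $0 < j \le s$) would violate inducedness, but any such chord also provides a shortcut that can be exploited: either it produces a strictly shorter $m$-$v$ path, contradicting $d(m, v) \ge \lceil (k+1)/2 \rceil$, or the combination of an alternative splicing with the shortcut yields an induced $u_0$-$v$ path that is still longer than $\ell$. Making this case analysis precise — in particular, choosing $Q$ among all shortest $m$-$v$ paths to minimize chords with $P$, and handling the parities of $\ell$ and $k$ as well as the situation when $\ell$ is strictly less than $k-2$ — is where I expect most of the technical effort to sit. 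Should this direct approach not close cleanly, I would fall back on a Bacs\'o--Tuza-type structural lemma (every connected $P_k$-free graph admits a connected dominating subgraph of diameter at most $k - 3$), from which a center of such a dominating subgraph has eccentricity at most $\lceil (k-3)/2 \rceil + 1 = \lceil (k-1)/2 \rceil$ in $G$, yielding the required radius bound directly.
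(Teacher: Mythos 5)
Your overall strategy --- bound the radius $r(G)$ and then apply the folklore inequality $b(G) \le r(G) + 1$ --- is a sensible one, and both the direct splicing attempt and the Bacs\'o--Tuza fallback are pointed in a reasonable direction, but neither is complete as written. The paper's proof is, in effect, your fallback carried out correctly: it invokes Camby and Schaudt's theorem (Theorem~\ref{thm:PkFreeStructureCamby}: every connected $P_k$-free graph with $k \ge 4$ has a connected dominating set $D$ with $G[D]$ either $P_{k-2}$-free or isomorphic to $P_{k-2}$), uses $b(G) \le b(G[D]) + 1$, and inducts on $k$ with base cases $k = 2$ and $k = 3$; no radius intermediate is needed.

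The direct argument has the gap you yourself flag, and it is genuine: the spliced walk really can acquire chords, and it is not obvious how to discharge them uniformly. For instance, in $C_5$ take the longest induced path $P = u_0 u_1 u_2 u_3$, so $\ell = 3$ and $m = u_1$; the fifth vertex $v$ has $d(m,v) = 2 > \lfloor \ell/2 \rfloor$, and the only meaningful splice of a shortest $m$--$v$ path with a half of $P$, namely $u_3 u_2 u_1 u_0 v$, has the chord $u_3 v$. This does not contradict the theorem (your target $\lceil(k-1)/2\rceil = 2$ still holds for $k = 5$), but it shows inducedness is not free and that the chord analysis you postpone carries the real weight of the proof.

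The fallback has a concrete error rather than just a postponed detail. Knowing only that the connected dominating subgraph $G[D]$ has \emph{diameter} at most $k-3$ does not give a vertex of $G[D]$-eccentricity at most $\lceil(k-3)/2\rceil$: in general the radius of a graph is not $\lceil \mathrm{diam}/2 \rceil$, and cycles are self-centered (radius equals diameter), so a priori $G[D]$ could have radius as large as $k-3$, which would only give $r(G) \le k-2$. To close this you must use the sharper form of the lemma --- that $G[D]$ is itself $P_{k-2}$-free or a copy of $P_{k-2}$ --- and induct on $k$ to obtain $r(G[D]) \le \lceil(k-3)/2\rceil$. Once you do that, you are running precisely the induction the paper runs, merely routed through $r(G)$ and the extra step $b(G) \le r(G) + 1$ rather than through the single inequality $b(G) \le b(G[D]) + 1$. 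The two arguments are then morally identical; the paper's is one hop shorter and, in the case $G[D] \cong P_{k-2}$, uses the stronger fact $b(P_{k-2}) = \lceil\sqrt{k-2}\,\rceil$ directly.
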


\begin{proof}
    The proof is based on induction on $k$. Note that the result is trivial for $k=2$ and $k=3$, which form the base cases. Let $r\ge 4$ be a positive integer, and for any connected $P_k$-free graph $H$ with $2 \le k \le r-1$, we have $b(H) \le \ceil{\frac{k+1}{2}}$. Now, let $G$ be a connected $P_r$-free graph. Since $r\ge 4$, by Theorem~\ref{thm:PkFreeStructureCamby}, we have a connected dominating set $D$ of $G$ such that $G[D]$ is either a $P_{r-2}$-free graph or a graph isomorphic to $P_{r-2}$. Further, since $D$ is a dominating set of $G$, we have $b(G) \le b(G[D]) + 1$.
    
    If $G[D]$ is a connected $P_{r-2}$-free graph, then by the induction hypothesis, we have $b(G[D]) \le \ceil{\frac{r-1}{2}}$. This implies that $b(G)\le \ceil{\frac{r+1}{2}}$, and we are done. On the other hand, if  $G[D]$ is isomorphic to $P_{r-2}$, then by Theorem~\ref{thm:PathAttainsTheBound}, we have that $b(G[D])=b(P_{r-2}) = \ceil{\sqrt{r-2}~}$. Since $r \ge 4$, we then have $b(G)\le \ceil{\sqrt{r-2}}+1\le \ceil{\frac{r+1}{2}}$. Hence, the theorem.
\end{proof}

\noindent\textbf{Tightness of Theorem~\ref{thm:PkFreeBound}:}
We claim that the upper bound in Theorem~\ref{thm:PkFreeBound} is tight. To justify this, consider the graph $\Tilde{G}$ shown in Fig.~\ref{fig:P6Free} which is a $P_6$-free graph with $b(\Tilde{G}) = 4 = \ceil*{\frac{k+1}{2}}$.  We also show that for any integer $k$ with $2 \le k < 2\lceil \sqrt{n}~\rceil - 1$, the upper bound in Theorem~\ref{thm:PkFreeBound} is tight up to an additive constant 1. For this, we present an infinite subclass of $P_k$-free graphs having their burning number exactly equal to $\floor*{\frac{k}{2}}$. Let $\mathbb{P}_r$ be a spider with the degree of the branching vertex being $r$, and the length of each leg (a total of $r$ legs) being $r-1$. The graph $\mathbb{P}_r$ is shown in Fig.~\ref{fig:PrFree}. One can verify that if $r = \floor*{\frac{k}{2}}$, then $\mathbb{P}_r$ is a $P_k$-free graph with $b(\mathbb{P}_r) = r$.

\begin{figure}[ht]
    \begin{subfigure}[b]{0.3\linewidth}
        \centering
        \begin{tikzpicture}[scale=0.75]
    \begin{scope}[every node/.style={empty vertex}]
        \node [label={[above]1:{$x_1$}}] (x1) at (1,3) {};
        \node [label={[above]1:{$x_2$}}] (x2) at (1,2) {};
        \node [label={[above]1:{$x_3$}}] (x3) at (1,1) {};
        \node [label={[above]1:{$x_4$}}] (x4) at (1,0) {};
        \node [label={[above]1:{$x'_1$}}] (x'1) at (0,3) {};
        \node [label={[above]1:{$x'_2$}}] (x'2) at (0,2) {};
        \node [label={[above]1:{$x'_3$}}] (x'3) at (0,1) {};
        \node [label={[above]1:{$x'_4$}}] (x'4) at (0,0) {};
        \node [label={[above]1:{$y_1$}}] (y1) at (2.5,3) {};
        \node [label={[above]1:{$y_2$}}] (y2) at (2.5,2) {};
        \node [label={[above]1:{$y_3$}}] (y3) at (2.5,1) {};
        \node [label={[above]1:{$y_4$}}] (y4) at (2.5,0) {};
        \node [label={[above]1:{$y'_1$}}] (y'1) at (3.5,3) {};
        \node [label={[above]1:{$y'_2$}}] (y'2) at (3.5,2) {};
        \node [label={[above]1:{$y'_3$}}] (y'3) at (3.5,1) {};
        \node [label={[above]1:{$y'_4$}}] (y'4) at (3.5,0) {};
    \end{scope}
    \draw (x1) to (y1);
    \draw (x1) to (y2);
    \draw (x1) to (y3);
    \draw (x1) to (y4);
    \draw (x2) to (y1);
    \draw (x2) to (y2);
    \draw (x2) to (y3);
    \draw (x2) to (y4);
    \draw (x3) to (y1);
    \draw (x3) to (y2);
    \draw (x3) to (y3);
    \draw (x3) to (y4);
    \draw (x4) to (y1);
    \draw (x4) to (y2);
    \draw (x4) to (y3);
    \draw (x4) to (y4);
    \draw (x1) to (x'1);
    \draw (x2) to (x'2);
    \draw (x3) to (x'3);
    \draw (x4) to (x'4);
    \draw (y1) to (y'1);
    \draw (y2) to (y'2);
    \draw (y3) to (y'3);
    \draw (y4) to (y'4);
\end{tikzpicture}
        \caption{Graph $\Tilde{G}$}
        \label{fig:P6Free}
    \end{subfigure}
    \begin{subfigure}[b]{0.69\linewidth}
        \centering
        \begin{tikzpicture}[scale=0.75]
    \begin{scope}[every node/.style={empty vertex}]
    \node [label=below:{$x$}] (x) at (0,1.8) {};
    \node [label=below:{$x_1^1$}] (x11) at (-1,1.8) {};
    \node [label=below:{$x_1^2$}] (x12) at (-2,1.8) {};
    \node [label={[below]1:{$x_1^{r-1}$}}] (x1r-1) at (-4.25,1.8) {};
    \node [label=below:{$x_2^1$}] (x21) at (-1,0) {};
    \node [label=below:{$x_2^2$}] (x22) at (-2,0) {};
    \node [label={[below]1:{$x_2^{r-1}$}}] (x2r-1) at (-4.25,0) {};
    \node [label={[below]1:{$x_{r-1}^1$}}] (xr-11) at (1,0) {};
    \node [label={[below]1:{$x_{r-1}^2$}}] (xr-12) at (2,0) {};
    \node [label={[below]1:{$x_{r-1}^{r-1}$}}] (xr-1r-1) at (4.25,0) {};
    \node [label=below:{$x_r^1$}] (xr1) at (1,1.8) {};
    \node [label=below:{$x_r^2$}] (xr2) at (2,1.8) {};
    \node [label={[below]1:{$x_r^{r-1}$}}] (xrr-1) at (4.25,1.8){};
    \end{scope}
    
    \node (y) at (-0.6,0.6) {};
    \node (z) at (0.6,0.6) {};
    \node (x13) at (-2.75,1.8) {};
    \node (x14) at (-3.25,1.8) {};
    \node (x23) at (-2.75,0) {};
    \node (x24) at (-3.25,0) {};
    \node (xr-13) at (2.75,0) {};
    \node (xr-14) at (3.25,0) {};
    \node (xr3) at (2.75,1.8) {};
    \node (xr4) at (3.25,1.8) {};
    
    \draw (x) to (x11);
    \draw (x11) to (x12);
    \draw (x12) to (x13);
    \draw (x14) to (x1r-1);
    \draw [thick, densely dotted] (x13) to (x14);
    \draw (x) to (x21);
    \draw (x21) to (x22);
    \draw (x22) to (x23);
    \draw (x24) to (x2r-1);
    \draw [thick, densely dotted] (x23) to (x24);
    \draw (x) to (xr-11);
    \draw (xr-11) to (xr-12);
    \draw (xr-12) to (xr-13);
    \draw (xr-14) to (xr-1r-1);
    \draw [thick, densely dotted] (xr-13) to (xr-14);
    \draw (x) to (xr1);
    \draw (xr1) to (xr2);
    \draw (xr2) to (xr3);
    \draw (xr4) to (xrr-1);
    \draw [thick, densely dotted] (xr3) to (xr4);
    \draw [thick, loosely dotted, bend right] (y) to (z);
\end{tikzpicture}
        \caption{Graph $\mathbb{P}_r$}
        \label{fig:PrFree}
    \end{subfigure}
    \caption{Tight examples for Theorem~\ref{thm:PkFreeBound}}
    \label{fig:PkFree}
\end{figure}
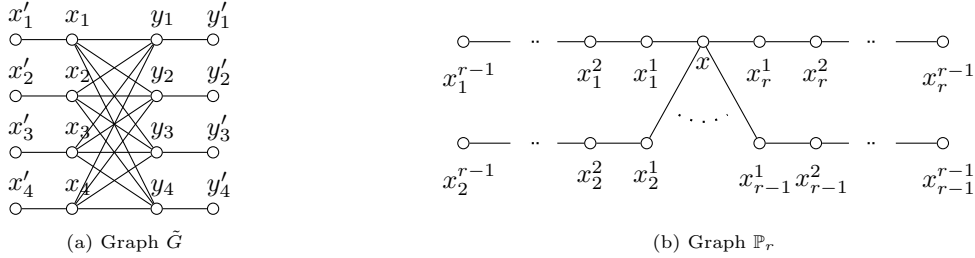

\smallskip

\noindent\textbf{Algorithmic Consequences:} Our simple combinatorial proof of Theorem~\ref{thm:PkFreeBound} also has an algorithmic consequence. Using the trivial bound $b(G) \le k-1$ for $P_k$-free graphs and a brute-force approach, it was shown in~\cite{Kamali2020BurningTwoWorlds} that an optimal burning sequence of $G$ can be found in $O(n^{k+1})$ time. For a constant value of $k$, our improved bound $b(G) \le \ceil*{\frac{k+1}{2}}$ for $P_k$-free graphs (Theorem~\ref{thm:PkFreeBound}) leads to a better running time. Indeed, since it suffices to enumerate burning sequences of length at most $\ceil*{\frac{k+1}{2}}$, and there are $O(n^{\ceil*{\frac{k+1}{2}}})$ such sequences, while verifying whether a sequence is a valid burning sequence takes $O(n^2)$ time, an optimal burning sequence of a $P_k$-free graph can be found in $O(n^{\ceil*{\frac{k+1}{2}}+2})$ time.

On the other hand, when $k$ is not a constant, a burning sequence of size at most $\ceil*{\frac{k+1}{2}}$ can still be constructed in polynomial time. Indeed, by Theorem~\ref{thm:PkFreeStructureCamby}, a minimum connected dominating set $D$ of a connected $P_k$-free graph can be computed in $O(n^7)$ time such that $G[D]$ is either $P_{k-2}$-free or isomorphic to $P_{k-2}$. The proof of Theorem~\ref{thm:PkFreeBound} is constructive; it recursively computes a burning sequence for $G[D]$ and extends it to a burning sequence for $G$. Since the recursion depth is at most $\ceil*{\frac{k+1}{2}}$, the overall running time is $O(kn^7)$. Therefore, we have the following corollary.


\begin{corollary}
    Let $G$ be a connected $P_k$-free graph with $k\geq 2$. A burning sequence of $G$ having size at most $\ceil*{\frac{k+1}{2}}$ can be found in $O(kn^7)$ time.
\end{corollary}

Note that for any integer $k < 2\ceil{\sqrt{n}~} - 1$, Theorem~\ref{thm:PkFreeBound} settles Conjecture~\ref{conj:BurningNumber} affirmatively for $P_k$-free graphs of order $n$. Moreover, we showed that the bound in Theorem~\ref{thm:PkFreeBound} is tight when $k=6$, and the bound is tight up to an additive constant $1$ for any $k$ with $2 \le k < 2\ceil{\sqrt{n}~} - 1$.

\section{Variants of Graph Burning}
In this section, we study two variants of graph burning, namely edge burning and total burning. Note that, for a graph $G$, the burning number may differ from its edge burning number and total burning number. This naturally motivates the study of these variants.

\subsection{Edge Burning}
As the name ``\emph{edge burning}'' suggests, here we burn only the edges of the graph. At each step $i$, a new edge is burned along with the unburned edges that are neighbors of the edges burned before the $i^{th}$ step. The \emph{edge burning number} of a graph $G$, denoted by $b_L(G)$, is the minimum number of steps required to burn all the edges of $G$. The \emph{line graph} of a graph $G$, denoted by $L(G)$, is the graph with the vertex set $E(G)$, and $xy \in E(L(G))$ if and only if the corresponding edges $x$ and $y$ are neighbors in $G$. It is easy to see that $b_L(G) = b(L(G))$ for any graph $G$. Therefore, computing $b_L(G)$ is equivalent to computing $b(L(G))$. Henceforth, we use $b_L(G)$ to denote the burning number of the line graph $L(G)$. Since the line graph of a path $P_n$ is $P_{n-1}$ and the line graph of a cycle $C_n$ is $C_n$ itself, we have the following Observation due to Theorem~\ref{thm:PathAttainsTheBound}.

\begin{observation}
    For a path $P_n$ and a cycle $C_n$, $b_L(P_n) = \ceil{\sqrt{n-1}~}$ and $b_L(C_n) = \ceil{\sqrt{n}~}$.
\end{observation}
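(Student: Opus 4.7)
The plan is to chain together two simple facts that are already in hand: the identity $b_L(G)=b(L(G))$, which the paper has just stated, and Theorem~\ref{thm:PathAttainsTheBound}, which gives the exact burning number of paths and cycles. So the proof is really just a computation of the line graphs of $P_n$ and $C_n$, after which everything follows mechanically.

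First I would verify the two structural claims about line graphs. For $P_n$ with vertex sequence $v_1,v_2,\dots,v_n$, the edge set is $\{v_iv_{i+1} : 1\le i \le n-1\}$, giving $n-1$ vertices in $L(P_n)$; two such edges are incident at a common vertex iff their indices differ by $1$, so $L(P_n)\cong P_{n-1}$. For $C_n$ with vertices $v_1,\dots,v_n$ arranged cyclically, the $n$ edges form a cyclic adjacency pattern in $L(C_n)$, giving $L(C_n)\cong C_n$. These identifications are standard and can be written down in a sentence or two.

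Then I would apply Theorem~\ref{thm:PathAttainsTheBound} to get
\[
b_L(P_n) \;=\; b(L(P_n)) \;=\; b(P_{n-1}) \;=\; \lceil\sqrt{n-1}\,\rceil,
\]
\[
b_L(C_n) \;=\; b(L(C_n)) \;=\; b(C_n) \;=\; \lceil\sqrt{n}\,\rceil.
\]
There is no real obstacle here; the only thing to be a little careful about is the degenerate case $n=1$ for $P_n$ (no edges, so $b_L(P_1)=0$, which matches $\lceil\sqrt{0}\,\rceil=0$) and $n=2$ for $C_n$ (if cycles on $2$ vertices are allowed in the paper's convention, both the line graph identification and the formula remain valid; otherwise this case is vacuous). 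I would flag these briefly and then declare the observation proved.
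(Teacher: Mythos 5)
Your proof is correct and follows exactly the same route as the paper: invoke $b_L(G)=b(L(G))$, note $L(P_n)\cong P_{n-1}$ and $L(C_n)\cong C_n$, then apply Theorem~\ref{thm:PathAttainsTheBound}. The added remarks about the degenerate cases $n=1$ and $n=2$ are harmless but not needed.
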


First, we present a couple of Lemmas with the intention of proving Theorem~\ref{thm:LineGraphBounds}.

\begin{lemma}
    \label{lem:LineGraphBounds_first}
    For any graph $G$, $b(G) \le b_L(G) + 1$.
\end{lemma}

\begin{proof}
    Let $G$ be a graph with $n$ vertices and $m$ edges, and let $H = L(G)$, with $V(H) = E(G) =\{e_1, e_2, \dots, e_m\}$. Let $b_L(G) = k$ and let $B_L = (b_1, b_2, \dots, b_k)$ be an optimal burning sequence for $H$. Now, consider the sequence $B = (a_1, a_2, \dots , a_k, a_{k+1})$ in $G$, where $a_i$ is an arbitrary end vertex of the edge $b_i$ in $G$ for $1 \le i \le k$, and $a_{k+1}$ is an unburned vertex after the $k^{th}$ iteration (if it exists) in $G$, chosen arbitrarily. Now, it is enough to show that $B$ is a burning sequence for $G$, since $|B| = b_L(G)+1$.

    Recall that for $1\le i\le |B|$, $S_i(B)$ and $S_i^-(B)$ are the sets of vertices of $G$ burned in the $i^{th}$ step and within the first $i$ steps, respectively. Let $e_i$ be an arbitrary vertex in $H$ for some $i \in [m]$, and let $e_i=uv$ be the corresponding edge in $G$. Now, we ``claim'' that if $e_i \in S_l(B_L)$ in $H$ for some $l \in [k]$, then $u \in S_l^-(B)$ or $v \in S_l^-(B)$ in $G$. Then, since all vertices of $H$ are burned in $k$ steps by $B_L$ and $e_i$ was chosen arbitrarily, all vertices of $G$ are burned in $k+1$ steps by $B$. This implies that $B$ is a burning sequence for $G$, as desired. Thus, it remains to prove our ``claim''.

    We prove this claim by induction on $l$. The base case, when $l=1$, is trivial. Let the hypothesis be true for all $l \le j-1$. We need to prove the hypothesis for $l=j$. We have a vertex $e_i$ of $H$ with $e_i \in S_j(B_L)$, which corresponds to the edge $uv$ in $G$. This can happen in two ways: (i) $e_i\notin B_L$ or (ii) $e_i\in B_L$. Suppose (i) is true. Then the vertex $e_i$ is burned by a fire spread from its neighbor, say $e_p$, which is burned in the $(j-1)^{th}$ iteration. Since $e_p e_i \in E(H)$, either $e_p = uw$ or $e_p = vw$ in $G$ for some $w \in V(G)$. In either case, by the induction hypothesis, at least one vertex in $\{u, v, w\}$ is in $S_{j-1}^-(B)$. Hence, at least one vertex in $\{u,v\}$ is in $S_j^-(B)$. 
    On the other hand, suppose (ii) is true. Then $b_j = e_i$, i.e., the vertex $e_i$ is the $j^{th}$ burning source in $B_L$. Hence, by choice of $B$, one of the end vertices of $e_i$, say $u$, will be the $j^{th}$ burning source in $B$, implying that $u \in S_l^-(B)$, which completes the proof.
\end{proof}

\begin{lemma}
    \label{lem:LineGraphBounds_second}
     For any graph $G$, $b_L(G) \le b(G) + 1$.  
\end{lemma}

\begin{proof}
    Let $G$ be a graph with $n$ vertices and $m$ edges, and let $H = L(G)$, with $V(H) = E(G)$. Let $b(G) = k$ and $B = (b_1, b_2, \dots, b_k)$ be an optimal burning sequence for $G$. Consider the sequence $B_L = (a_1, a_2, \ldots, a_k, a_{k+1})$ in $H$, where for every $i \in [k]$, $a_i$ in $V(H)$ corresponds to any one edge incident on the vertex $b_i$ in $G$, and $a_{k+1}$ is a vertex of $H$, unburned in the first $k$ steps, chosen arbitrarily. It is enough to prove that $B_L$ is a burning sequence for $H$, since $|B_L| = b(G)+1$.
    
    Let $v_i$ be an arbitrary vertex in $G$ for some $i \in [n]$. Now, we ``claim'' that if $v_i \in S_l(B)$ in $G$ for some $l \in [k]$, then at least one vertex in $H$ corresponding to the edges incident on $v_i$ in $G$ belongs to $S_l^-(B_L)$. Then, since all vertices of $G$ are burned in $k$ steps by $B$ and $v_i$ was chosen arbitrarily, all vertices of $H$ (edges of $G$) are burned in $k+1$ steps by $B_L$. This implies that $B_L$ is a burning sequence for $H$, as desired. Thus, it remains to prove our ``claim''.

    We prove this claim by induction on $l$. The base case, when $l=1$, is trivial. Let the hypothesis be true for all $l \le j-1$. We need to prove the hypothesis for $l=j$. We have a vertex $v_i$ of $G$ with $v_i \in S_j(B)$. This can happen in two ways: (i)  $v_i \notin B$  or (ii) $v_i \in B$. Suppose (i) is true. Then the vertex $v_i$ gets burned by a fire spread from one of its neighbors, say $v_p$, which is burned in the $(j-1)^{th}$ iteration. By induction hypothesis, the vertex in $H$ corresponding to the edges incident on $v_p$ in $G$, is burned in the $(j-1)^{th}$ iteration in $H$ by $B_L$, which implies that the vertex in $H$ corresponding to the edge $v_i v_p$ in $G$, belongs to $S_j^-(B_L)$. 
    On the other hand, suppose (ii) is true. Then $v_i$ must be the $j^{th}$ burning source in $B$, i.e., $b_j = v_i$. Since the vertex corresponding to an edge incident on $v_i$ in $G$ is selected as the $j^{th}$ burning source in $B_L$, the claim is true for $l=j$. This completes the proof.
\end{proof}

Now, Theorem~\ref{thm:LineGraphBounds} that relates $b_L(G)$ and $b(G)$, is immediate from Lemma~\ref{lem:LineGraphBounds_first} and Lemma~\ref{lem:LineGraphBounds_second}.

\begin{theorem}\label{thm:LineGraphBounds}
   For a graph $G$,  $b(G) - 1 \le b_L(G) \le b(G) + 1$.
\end{theorem}

Note that the lower and the upper bounds in Theorem~\ref{thm:LineGraphBounds} are tight. For instance, as $L(P_k)$ is isomorphic to  $P_{k-1}$, for any $k=n^2+1$ with $n\ge 1$, we have $b(P_k) = n+1 $ and  $b_L(P_k) = n$. Similarly, one can verify that $b(K_n) = 2$ and  $b_L(K_n) = 3$ for every $n\ge 5$. But when we restrict ourselves to trees, we have a stronger upper bound as stated in the following theorem.

\begin{theorem}\label{thm:LineGraphTreeBounds}
    For a tree $T$, $b_L(T) \le b(T)$.
\end{theorem}

\begin{proof}
    Let $T$ be a tree with $b(T) = k$. We assume that $T$ is a rooted tree with an arbitrary vertex $x$ being its root. Let $T_1$ be the line graph of $T$. Note that the vertices corresponding to the set of all edges incident on a particular vertex in $T$ create a block in $T_1$. Let $t_x$ be the block in $T_1$ corresponding to the edges incident on the root vertex $x$. Let $T_2$ be a graph obtained from $T_1$ by adding a vertex $y$ and making it adjacent to every vertex in the block $t_x$ (see Fig.~\ref{fig:ModifiedLineGraph}). Since the graph induced by the vertices in $N_{T_2}^1[y]$ is a block in $T_2$, we have that for any neighbor $y'$ of $y$ and any positive integer $r$, $N_{T_2}^r[y] \cap V(T_1) \subseteq N_{T_1}^r [y']$. Hence, $T_1$ is a good-monotonic subgraph of $T_2$, which implies that $b(T_1) \le b(T_2)$. An example of a tree $T$ with the corresponding graphs $T_1$ and $T_2$ is provided in Fig.~\ref{fig:TreeWithT1T2}. Note that in this example, $t_x$ is the induced subgraph $T_1[\{e_1,e_2\}]$.

    \begin{figure}[ht]
    \begin{subfigure}[b]{0.33\linewidth}
        \centering
        \begin{tikzpicture}[scale=0.7]
    \begin{scope}[every node/.style={empty vertex,inner sep=0.5pt,minimum size=3.5mm}]
        \node[label= above: \small${x}$] (v1) at (0,4.5){\tiny{$v_0$}};
        \node (v2) at (-1.5,3) {\tiny{$v_1$}};
        \node (v3) at (1.5,3) {\tiny{$v_2$}};
        \node (v4) at (-2.5,1.5) {\tiny{$v_3$}};
        \node (v5) at (-0.5,1.5) {\tiny{$v_4$}};
        \node (v6) at (0.5,1.5) {\tiny{$v_5$}};
        \node (v7) at (1.5,1.5) {\tiny{$v_6$}};
        \node (v8) at (2.5,1.5) {\tiny{$v_7$}};
        \node (v9) at (-0.5,0) {\tiny{$v_8$}};
        \node (v10) at (1.5,0) {\tiny{$v_9$}};
    \end{scope}
    \draw (v1) to node[below,label={[left]\tiny{$e_1$}}] {} (v2);
    \draw (v1) to node[below,label={[right]\tiny{$e_2$}}] {} (v3);
    \draw (v2) to node[below,label={[left]\tiny{$e_3$}}] {} (v4);
    \draw (v2) to node[below,label={[right]\tiny{$e_4$}}] {} (v5);
    \draw (v3) to node[below,label={[left]\tiny{$e_5$}}] {} (v6);
    \draw (v3) to node[below,label={[below]\tiny{$e_6$}}] {} (v7);
    \draw (v3) to node[below,label={[right]\tiny{$e_7$}}] {} (v8);
    \draw (v6) to node[below,label={[left]\tiny{$e_8$}}] {} (v9);
    \draw (v6) to node[below,label={[right]\tiny{$e_9$}}] {} (v10);
\end{tikzpicture}
        \caption{A tree $T$ ($\cong T_3$)}
        \label{fig:SampleTree}
    \end{subfigure}
    \begin{subfigure}[b]{0.33\linewidth}
        \centering
        \begin{tikzpicture}[scale=0.7]
    \begin{scope}[every node/.style={empty vertex,inner sep=0.5pt,minimum size=3.5mm}]
        \node (v1) at (-1.5,3) {\tiny{$e_1$}};
        \node (v2) at (1.5,3) {\tiny{$e_2$}};
        \node (v3) at (-2.5,1.5) {\tiny{$e_3$}};
        \node (v4) at (-0.5,1.5) {\tiny{$e_4$}};
        \node (v5) at (0.5,1.5) {\tiny{$e_5$}};
        \node (v6) at (1.5,1.5) {\tiny{$e_6$}};
        \node (v7) at (2.5,1.5) {\tiny{$e_7$}};
        \node (v8) at (-0.5,0) {\tiny{$e_8$}};
        \node (v9) at (1.5,0) {\tiny{$e_9$}};
    \end{scope}
    \draw (v1) to (v2);
    \draw (v1) to (v3);
    \draw (v1) to (v4);
    \draw (v2) to (v5);
    \draw (v2) to (v6);
    \draw (v2) to (v7);
    \draw (v3) to (v4);
    \draw (v5) to (v6);
    \draw [bend left] (v5) to (v7);
    \draw (v5) to (v8);
    \draw (v5) to (v9);
    \draw (v6) to (v7);
    \draw (v8) to (v9);
\end{tikzpicture}
        \caption{Line graph $T_1$ of $T$}
        \label{fig:LineGraphOfTree}
    \end{subfigure}
    \begin{subfigure}[b]{0.33\linewidth}
        \centering
        \begin{tikzpicture}[scale=0.7]
    \begin{scope}[every node/.style={empty vertex,inner sep=0.5pt,minimum size=3.5mm}]
        \node (y) at (0,4.5) {\tiny{$y$}};
        \node (v1) at (-1.5,3) {\tiny{$e_1$}};
        \node (v2) at (1.5,3) {\tiny{$e_2$}};
        \node (v3) at (-2.5,1.5) {\tiny{$e_3$}};
        \node (v4) at (-0.5,1.5) {\tiny{$e_4$}};
        \node (v5) at (0.5,1.5) {\tiny{$e_5$}};
        \node (v6) at (1.5,1.5) {\tiny{$e_6$}};
        \node (v7) at (2.5,1.5) {\tiny{$e_7$}};
        \node (v8) at (-0.5,0) {\tiny{$e_8$}};
        \node (v9) at (1.5,0) {\tiny{$e_9$}};
    \end{scope}
    \draw (v1) to (v2);
    \draw (v1) to (v3);
    \draw (v1) to (v4);
    \draw (v2) to (v5);
    \draw (v2) to (v6);
    \draw (v2) to (v7);
    \draw (v3) to (v4);
    \draw (v5) to (v6);
    \draw [bend left] (v5) to (v7);
    \draw (v5) to (v8);
    \draw (v5) to (v9);
    \draw (v6) to (v7);
    \draw (v8) to (v9);
    \draw (y) to (v1);
    \draw (y) to (v2);
\end{tikzpicture}
        \caption{Graph $T_2$ obtained from $T_1$}
        \label{fig:ModifiedLineGraph}
    \end{subfigure}
    \caption{An example of a tree with the corresponding graphs $T_1$ and $T_2$.}
    \label{fig:TreeWithT1T2}
    \end{figure}
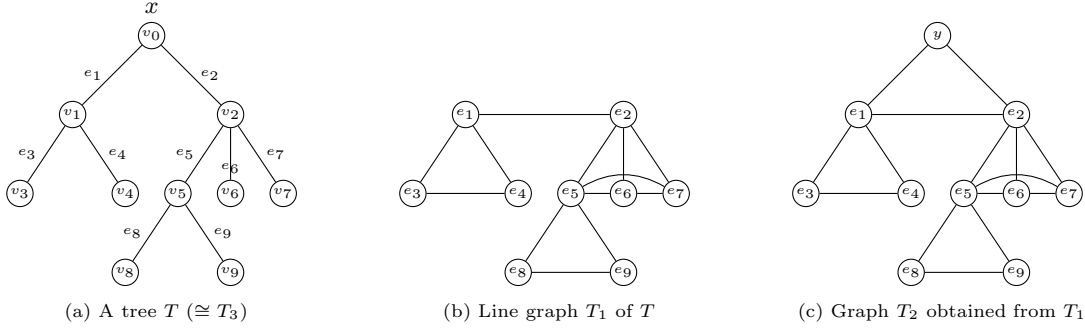
    
Now, we obtain a tree $T_3$ by applying the Breadth-First Search on $T_2$ with $y$ as the source vertex. We claim $T_3 \cong T$. To prove this, we explicitly provide a bijection $f : V(T) \rightarrow V(T_3)$. First, we assign $f(x) = y$, where $x$ is the root of $T$. Note that $T_3$ does not contain any edges between the vertices that are at the same level in the BFS order of $T_2$. For any vertex $z$ in $V(T) \setminus \{x\}$, there exists a unique edge $e_z$ between $z$ and its parent in $T$. Since $T_2$ is obtained from the line graph of $T$, by the definition of $T_3$, there exists a vertex $v_{e_z}$ in $T_3$ corresponding to the edge $e_z$ in $T$. Then for any vertex $z$ in $V(T) \setminus \{x\}$, we assign $f(z) = v_{e_z}$. One can verify that $f$ is the desired bijection, implying that $b(T_3) = b(T)$. Since edge deletion in a graph does not decrease its burning number, we have $b(T_2) \le b(T_3)$. Hence, we collectively have the following $b_L(T) = b(T_1) \le b(T_2) \le b(T_3) = b(T) = k$, as desired.
\end{proof}

\subsection{Total Burning}
In the ``\emph{total burning}'' problem, we burn both vertices and edges of the graph. In each step $i$, an unburned vertex or edge is burned, and the fire spreads to all vertices and edges that are neighbors of some vertex or edge that is burned within step $i-1$ until all the vertices and edges of $G$ have been burned. The \emph{total burning number} of a graph $G$, denoted by $b_T(G)$, is the minimum number of steps required to burn all vertices and edges of $G$. The \emph{total graph} of a graph $G$, denoted by $T(G)$, is the graph with the vertex set $V \cup E$, and $xy \in E(T(G))$ if and only if the corresponding $x$ and $y$ are neighbors in $G$. It is easy to see that $b_T(G) = b(T(G))$ for any graph $G$. Therefore, computing $b_T(G)$ is equivalent to computing $b(T(G))$. Henceforth, we use $b_T(G)$ to denote the burning number of a total graph $T(G)$. Moghbel~\cite{Moghbel2020TotalBurning} introduced the total burning problem and conjectured the following relationship between $b_T(G)$ and $b(G)$, which we settle positively in Theorem~\ref{thm:TotalGraphBounds}.

\begin{conjecture}[\cite{Moghbel2020TotalBurning}]\label{conj:TotalBurning}
    For a connected graph $G$ with the burning number $b(G)$, and its total graph $T(G)$ with the burning number $b_T(G)$, we have $b(G) \le b_T(G) \le b(G) + 1$.
\end{conjecture}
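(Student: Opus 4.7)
The plan is to prove both inequalities by relating burning sequences of $T(G)$ and $G$ via a natural correspondence, exploiting that the vertex set of $T(G)$ is the disjoint union $V(G)\cup E(G)$. The central fact I would record at the outset is the distance dictionary in $T(G)$: for any $u,v\in V(G)$ one has $d_{T(G)}(u,v)=d_G(u,v)$, and for any edge $e=xy\in E(G)$ and any vertex $v\in V(G)$ one has $d_{T(G)}(e,v)=1+\min(d_G(x,v),d_G(y,v))$ (with the right-hand side equal to $1$ when $v\in\{x,y\}$). This is the only ingredient needed for both directions.

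For the lower bound $b(G)\le b_T(G)$, I would take an optimal burning sequence $B_T=(a_1,\ldots,a_k)$ of $T(G)$ and construct a vertex sequence $B=(w_1,\ldots,w_k)$ in $G$ by setting $w_i=a_i$ whenever $a_i\in V(G)$, and choosing $w_i$ to be an arbitrary endpoint of $a_i$ whenever $a_i\in E(G)$. The claim I would verify is that $d_G(w_i,v)\le d_{T(G)}(a_i,v)$ for every $v\in V(G)$ and every index $i$. The only case requiring a brief computation is when $a_i=xy$ is an edge, $w_i=x$, and $v$ is closer in $G$ to the other endpoint $y$; here the triangle inequality gives $d_G(x,v)\le d_G(y,v)+1=\min(d_G(x,v),d_G(y,v))+1=d_{T(G)}(a_i,v)$. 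This inequality immediately implies that every vertex of $G$ is reached by $B$ within $k$ steps.

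For the upper bound $b_T(G)\le b(G)+1$, I would take an optimal burning sequence $B=(v_1,\ldots,v_k)$ of $G$ with $k=b(G)$ and run $B_T=(v_1,\ldots,v_k,a_{k+1})$ on $T(G)$, where $a_{k+1}$ is any element of $V(G)\cup E(G)$ still unburned after the first $k$ steps (if none remains, the sequence of length $k$ already suffices). Since the subgraph of $T(G)$ induced on $V(G)$ is exactly $G$, a straightforward induction on the step index shows that any vertex of $G$ burned by step $i$ in $G$ under $B$ is also burned by step $i$ in $T(G)$ under $B_T$. Hence every vertex of $G$ is burned by step $k$ in $T(G)$, and at step $k+1$ every edge $uw\in E(G)$ is burned automatically by fire spread from its endpoints, which are adjacent to it in $T(G)$.

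No step of this argument presents a serious combinatorial obstacle; the proof is essentially a translation between the two distance structures, and the only real care required is in the bookkeeping between vertices and edges of $G$ once they are viewed uniformly as vertices of $T(G)$. I would conclude with small examples showing that both extremes $b_T(G)=b(G)$ and $b_T(G)=b(G)+1$ can be attained, so that the bounds in the conjecture are tight.
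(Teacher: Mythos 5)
Your proposal is correct and takes essentially the same route as the paper: for the lower bound you project an optimal burning sequence of $T(G)$ onto $V(G)$ by replacing each edge-source with an endpoint and compare distances (the paper phrases this as the inclusion $N_{T(G)}^{k-i}[a_i]\cap V(G)\subseteq N_{T(G)}^{k-i}[b_i]\cap V(G)$, which is equivalent to your pointwise inequality $d_G(w_i,v)\le d_{T(G)}(a_i,v)$), and for the upper bound you run an optimal burning sequence of $G$ on $T(G)$ and append one extra source to finish the edge-vertices. Your explicit induction showing that vertices of $G$ burn in $T(G)$ at least as fast as in $G$ makes a step the paper leaves implicit a bit more rigorous, but the argument is the same.
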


\begin{theorem}\label{thm:TotalGraphBounds}
   Let $G$ be a connected graph, then $b(G) \le b_T(G) \le b(G) + 1$.
\end{theorem}

\begin{proof}
    Let $H = T(G)$. Then, $V(H)$ can be partitioned into two sets as $V(H) = \mathcal{V} \uplus \mathcal{E}$ where $\mathcal{V} = \{v_1,v_2, \dots ,v_n\}$ and $\mathcal{E} = \{e_1,e_2, \dots ,e_m\}$ correspond to the vertex set and the edge set of $G$.
    
    \emph{Upper Bound:} Let $B$ be an optimal burning sequence for $G$. Now, if we burn $H$ as per $B$, any vertex of $H$ that is unburned at the end of $|B|$ steps will be in $\mathcal{E}$. But any such vertex is adjacent to some vertex in $\mathcal{V}$, and hence, one additional step is sufficient to burn all such vertices. Hence, $b_T(G) \le b(G) + 1$.
    
    \emph{Lower Bound:} Let $b(H) = k$ and $A = (a_1, a_2, \dots , a_k)$ be an optimal burning sequence for $H$. Note that for any two vertices $u,v \in V(G)$, we have $d_G(u,v) = d_{H}(u,v)$ by the definition of total graphs. For the rest of the proof, ``for all $i \in [k]$'' is implicitly assumed whenever we mention the subscript $i$. Now, we obtain a vertex sequence $B = (b_1, b_2, \dots , b_k)$ of $G$ from $A$ as follows. If $a_i \in \mathcal{V}$, then $b_i = a_i$. Otherwise, if $a_i \in \mathcal{E}$, then $b_i \in \mathcal{V}$ is any of the endpoints of the edge $a_i$ in $G$, chosen arbitrarily. Now, it remains to prove that $B$ is a burning sequence for $G$. One can see that the following inclusion follows from the choice of $b_i$ and the fact that $d_G(u,v) = d_{H}(u,v)$ for all $\{u,v\}\in V(G)$.
    \begin{equation}\label{eqn:NbrsOfBiCoverNbrsOfAi}
        N_H^{k-i}[a_i] \cap V(G) \subseteq N_G^{k-i}[b_i]
    \end{equation}
    Since $G$ is an induced subgraph of $H$ and $A$ is a burning sequence for $H$, by (\ref{eqn:NbrsOfBiCoverNbrsOfAi}) we have:
    \begin{align*}
        & V(H) = \left[N_H^{k-1}[a_1] \cup N_H^{k-2}[a_2] \dots \cup N_H^0[a_k]\right] \\
        \implies & V(G) = \left[N_H^{k-1}[a_1] \cup N_H^{k-2}[a_2] \dots \cup N_H^0[a_k]\right] \cap V(G) \\
        \implies & V(G) = \left[N_H^{k-1}[a_1] \cap V(G)\right] \cup \left[N_H^{k-2}[a_2] \cap V(G)\right] \cup \dots \cup \left[N_H^0[a_k] \cap V(G)\right] \\
        \implies & V(G) \subseteq \left[N_G^{k-1}[b_1] \cup N_G^{k-2}[b_2] \cup \dots \cup N_G^0[b_k]\right]
    \end{align*}
    Since the reverse inclusion is obvious, $B$ is a burning sequence for $G$. Therefore, we have the inequality $b(G) \le |B| = k = |A| = b_T(G)$, as desired.
\end{proof}

\noindent\textbf{Hardness of Total Burning:}

Here, we intend to prove the hardness of the total burning problem (i.e., \BNP{} on the total graphs). To prove this, we define a \emph{spike graph} of a given graph $G$, denoted by  $G_s$, and observe the relationship between the burning numbers of $G$ and the total graph $T(G_s)$ of $G_s$ (Lemma~\ref{lem:TotalSpike}). A \emph{spike graph} of a graph $G$, denoted by $G_s$, is a graph obtained from $G$ such that for each vertex $v_i\in V(G)$, for $1\leq i\leq | V(G)|$, introduce a vertex $l_i$ and add an edge between $l_i$ and $v_i$, i.e., $V(G_s)=V(G)\cup \{l_i:v_i\in V(G)\}$ and $E(G_s)=E(G)\cup\{v_il_i:v_i\in V(G)\}$. An example of a spike graph is given in Fig.~\ref{fig:SpikeC4}.

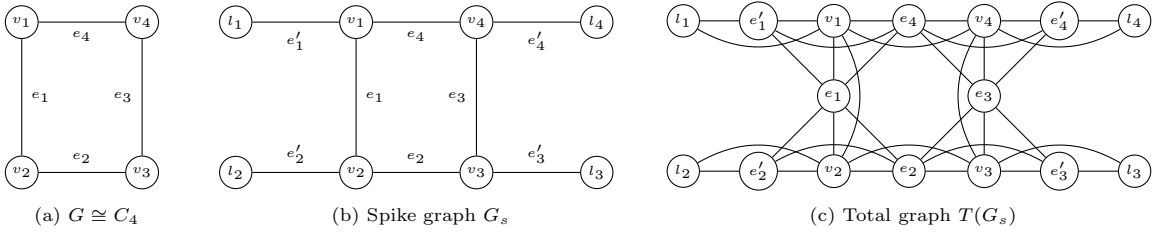
\begin{figure}[ht]
    \begin{subfigure}[b]{0.16\linewidth}
        \centering
        \begin{tikzpicture}[scale=0.8]
    \begin{scope}[every node/.style={empty vertex}]
        \node (v1) at (0,2.5) {\tiny{$v_1$}};
        \node (v2) at (0,0) {\tiny{$v_2$}};
        \node (v3) at (2,0) {\tiny{$v_3$}};
        \node (v4) at (2,2.5) {\tiny{$v_4$}};
    \end{scope}
    \draw (v1) to node[below,label={[right]\tiny{$e_1$}}] {} (v2);
    \draw (v2) to node[below,label={[above]\tiny{$e_2$}}] {} (v3);
    \draw (v3) to node[below,label={[left]\tiny{$e_3$}}] {} (v4);
    \draw (v4) to node[below,label={[below]\tiny{$e_4$}}] {} (v1);
\end{tikzpicture}
        \caption{$G \cong C_4$}
        \label{fig:C4}
    \end{subfigure}
    \begin{subfigure}[b]{0.40\linewidth}
        \centering
        \begin{tikzpicture}[scale=0.8]
    \begin{scope}[every node/.style={empty vertex}]
        \node (v1) at (0,2.5) {\tiny{$v_1$}};
        \node (v2) at (0,0) {\tiny{$v_2$}};
        \node (v3) at (2,0) {\tiny{$v_3$}};
        \node (v4) at (2,2.5) {\tiny{$v_4$}};
        \node (l1) at (-2,2.5) {\tiny{$l_1$}};
        \node (l2) at (-2,0) {\tiny{$l_2$}};
        \node (l3) at (4,0) {\tiny{$l_3$}};
        \node (l4) at (4,2.5) {\tiny{$l_4$}};
    \end{scope}
    \draw (v1) to node[below,label={[right]\tiny{$e_1$}}] {} (v2);
    \draw (v2) to node[below,label={[above]\tiny{$e_2$}}] {} (v3);
    \draw (v3) to node[below,label={[left]\tiny{$e_3$}}] {} (v4);
    \draw (v4) to node[below,label={[below]\tiny{$e_4$}}] {} (v1);
    \draw (v1) to node[below,label={[below]\tiny{$e'_1$}}] {} (l1);
    \draw (v2) to node[below,label={[above]\tiny{$e'_2$}}] {} (l2);
    \draw (v3) to node[below,label={[above]\tiny{$e'_3$}}] {} (l3);
    \draw (v4) to node[below,label={[below]\tiny{$e'_4$}}] {} (l4);
\end{tikzpicture}
        \caption{Spike graph $G_s$}
        \label{fig:SpikeC4}
    \end{subfigure}
    \begin{subfigure}[b]{0.43\linewidth}
        \centering
        \begin{tikzpicture}[scale=0.8]
    \begin{scope}[every node/.style={empty vertex}]
        \node (v1) at (0,2.5) {\tiny{$v_1$}};
        \node (v2) at (0,0) {\tiny{$v_2$}};
        \node (v3) at (2.5,0) {\tiny{$v_3$}};
        \node (v4) at (2.5,2.5) {\tiny{$v_4$}};
        \node (l1) at (-2.5,2.5) {\tiny{$l_1$}};
        \node (l2) at (-2.5,0) {\tiny{$l_2$}};
        \node (l3) at (5,0) {\tiny{$l_3$}};
        \node (l4) at (5,2.5) {\tiny{$l_4$}};
        \node (e'1) at (-1.25,2.5) {\tiny{$e'_1$}};
        \node (e'2) at (-1.25,0) {\tiny{$e'_2$}};
        \node (e'3) at (3.75,0) {\tiny{$e'_3$}};
        \node (e'4) at (3.75,2.5) {\tiny{$e'_4$}};
        \node (e1) at (0,1.25) {\tiny{$e_1$}};
        \node (e2) at (1.25,0) {\tiny{$e_2$}};
        \node (e3) at (2.5,1.25) {\tiny{$e_3$}};
        \node (e4) at (1.25,2.5) {\tiny{$e_4$}};
    \end{scope}
    \draw [bend left] (v1) to (v2);
    \draw [bend left] (v2) to (v3);
    \draw [bend left] (v3) to (v4);
    \draw [bend left] (v4) to (v1);
    \draw [bend left] (v1) to (l1);
    \draw [bend right] (v2) to (l2);
    \draw [bend left] (v3) to (l3);
    \draw [bend right] (v4) to (l4);
    \draw (v1) to (e'1);
    \draw (l1) to (e'1);
    \draw (v2) to (e'2);
    \draw (l2) to (e'2);
    \draw (v3) to (e'3);
    \draw (l3) to (e'3);
    \draw (v4) to (e'4);
    \draw (l4) to (e'4);
    \draw (v1) to (e1);
    \draw (v2) to (e1);
    \draw (v2) to (e2);
    \draw (v3) to (e2);
    \draw (v3) to (e3);
    \draw (v4) to (e3);
    \draw (v4) to (e4);
    \draw (v1) to (e4);
    \draw (e1) to (e2);
    \draw (e2) to (e3);
    \draw (e3) to (e4);
    \draw (e4) to (e1);
    \draw (e1) to (e'1);
    \draw (e1) to (e'2);
    \draw [bend right] (e2) to (e'2);
    \draw [bend left] (e2) to (e'3);
    \draw (e3) to (e'3);
    \draw (e3) to (e'4);
    \draw [bend right] (e4) to (e'4);
    \draw [bend left] (e4) to (e'1);
\end{tikzpicture}
        \caption{Total graph $T(G_s)$}
        \label{fig:TotalSpikeC4}
    \end{subfigure}
    \caption{An example of a spike graph and its total graph}
    \label{fig:SpikeExmpl}
\end{figure}

\begin{lemma}
    \label{lem:TotalSpike}
    Let $G$ be a graph and $G_s$ be its spike graph. Then $b_T(G_s) = b(G)+1$.
\end{lemma}

\begin{proof}
    Recall that $b_T(G_s) = b(T(G_s))$. Let $H = T(G_s)$. Observe that the vertex set of $H$ can be partitioned as $V(H) = \mathcal{V} \uplus \mathcal{E} \uplus \mathcal{L} \uplus \mathcal{E'}$, where $\mathcal{V} = \{v_1, v_2, \dots, v_n\}$ and $\mathcal{E} = \{e_1, e_2, \dots, e_m\}$ correspond to the vertex set and the edge set of $G$, respectively, while $\mathcal{L} = \{l_1, l_2, \dots, l_n\}$ and $\mathcal{E'} = \{e'_1, e'_2, \dots, e'_n\}$ correspond to the sets of newly added vertices and edges in $G_s$, respectively. For each $i \in [n]$, the edge $e'_i$ corresponds to $v_i l_i$ in $G_s$. See Fig.~\ref{fig:SpikeExmpl} for an illustration of a spike graph and its total graph.
    
    Let $B$ be an optimal burning sequence for $G$. Burning $H$ as per $B$ leaves only vertices in $\mathcal{L} \cup \mathcal{E'} \cup \mathcal{E}$ unburned after $|B|$ steps. But any such vertex is adjacent to some vertex in $\mathcal{V}$, and hence one additional step suffices to burn all remaining vertices. Therefore, we have $b_T(G_s) \le b(G)+1$.
    
    Now, it remains to show that $b_T(G_s) > b(G)$. For contradiction, assume that $b_T(G_s) = b(G) = k$. For each $i \in [n]$, let $X_i = \{v_i, l_i, e'_i\}$. Since $v_i$, $l_i$, and $e'_i$ form a clique in $H$, any burning sequence for $H$ cannot contain all three vertices of $X_i$; otherwise, the vertex from $X_i$ that appears last in the sequence would already have been burned by an earlier source, contradicting the definition of a burning sequence.

    The following claim follows from the observation that, for any $i \in [n]$, the set of all vertices equidistant from $v_i$ and $e'_i$ in $H$ is precisely $(\mathcal{E} \cup \mathcal{E'} \cup \{l_i\}) \setminus \{e'_i\}$ and the set of all vertices equidistant from $l_i$ and $e'_i$ is precisely $(\mathcal{V} \cup \mathcal{L}) \setminus \{l_i\}$. In other words, $\{z\in V(H): d_H(z,v_i) = d_H(z,e'_i)\} = (\mathcal{E} \cup \mathcal{E'} \cup \{l_i\}) \setminus \{e'_i\}$ and $\{z\in V(H): d_H(z,l_i) = d_H(z,e'_i)\} = (\mathcal{V} \cup \mathcal{L}) \setminus \{l_i\}$.
    
    \renewcommand\qed{$\hfill\square$}
    \begin{claim}\label{clm:EquidistantSource}
        In any burning sequence $\tilde{B}_T$ for $H$ of size $k'$, and for any $i \in [n]$,
        \begin{itemize}
            \item if $v_i, e'_i \in S_{k'}(\tilde{B}_T) \cap B_c(s)$ for some $s \in \tilde{B}_T$, then $s \in (\mathcal{E} \cup \mathcal{E'} \cup \{l_i\}) \setminus \{e'_i\}$;
            \item if $l_i, e'_i \in S_{k'}(\tilde{B}_T) \cap B_c(s)$ for some $s \in \tilde{B}_T$, then $s \in (\mathcal{V} \cup \mathcal{L}) \setminus \{l_i\}$.
        \end{itemize}
    \end{claim}
    
    The following claim establishes a `specific' optimal burning sequence for $H$.
    
    \begin{claim}\label{clm:SpecialSequence}
        There exists an optimal burning sequence for $H$ that contains only the vertices from $\mathcal{V} \cup \mathcal{E}$ in the first $k-1$ positions.
    \end{claim}

    \begin{proof}
        Let $\tilde{B}_T = (b_1,b_2,\ldots,b_k)$ be an optimal burning sequence for $H$. Now, for every $j \in [k-1]$, if $b_j \in X_i \setminus \{v_i\}$ for some $i \in [n]$, then we replace $b_j$ in $\tilde{B}_T$ as follows. If $v_i \notin S^-_{j-1}(\tilde{B}_T)$ and $v_i$ is not in $\tilde{B}_T$, then $b_j = v_i$. If not, $b_j$ is replaced by a vertex in $(\mathcal{V} \cup \mathcal{E}) \cap S_{j}(\tilde{B}_T)$ (non-empty since $j < k = b(H)$). Since for any positive integers $r$ and $\ell$, $N_H^r[l_{\ell}] \subset N_H^r[v_{\ell}]$ and $N_H^r[e'_{\ell}] \subset N_H^r[v_{\ell}]$, the replacement of $l_i$ or $e'_i$ with $v_i$ is valid and ensures the same length. Thus, modified $\tilde{B}_T$ satisfies the claim.
    \end{proof}
    
    Let $B_T = (b_1, b_2, \dots, b_k)$ be an optimal burning sequence for $H$ such that for any $i \in [k-1]$, we have $b_i \in \mathcal{V} \cup \mathcal{E}$. The existence of $B_T$ follows from Claim~\ref{clm:SpecialSequence}.

    \begin{claim}\label{clm:AllFromVAndE}
        $S_k(B_T) \cap \mathcal{V} \neq \emptyset$.
    \end{claim}

    \begin{proof}
        Suppose, for contradiction, $S_k(B_T) \cap \mathcal{V} = \emptyset$. This implies $S_k(B_T) \subset \mathcal{L} \cup \mathcal{E'} \cup \mathcal{E}$. Now, let $A = B_T$. For each $i \in [k-1]$, if $b_i \in \mathcal{E}$, then replace $b_i$ in $A$ with a neighbor of $b_i$ in $\mathcal{V} \cap S_i(A)$ if it exists and any vertex of $\mathcal{V} \cap S_i(A)$ otherwise. Note that $\mathcal{V} \cap S_i(A) \neq \emptyset$ since $i<k$. Let the modified $A$ be $(a_1, a_2, \dots, a_k)$. If we burn $H$ as per $A$, then after $k-1$ steps, every unburned vertex lies in $\mathcal{L} \cup \mathcal{E'} \cup \mathcal{E}$. Therefore, since $d_G(a_i, v) = d_H(a_i, v)$ for every $v \in V(G)$, $(a_1, a_2, \dots, a_{k-1})$ is a burning sequence for $G$ of length $k-1$, a contradiction to $b(G)=k$.
    \end{proof}
    \renewcommand\qed{$\hfill\blacksquare$}

    Now, by Claim~\ref{clm:AllFromVAndE}, there exists a vertex $v_j \in \mathcal{V} \cap S_k(B_T)$ for some $j \in [n]$. Since the first $k-1$ vertices in $B_T$ belong to $\mathcal{V} \cup \mathcal{E}$, it follows that $e'_j \in S_k(B_T)$ as well. Thus both $v_j$ and $e'_j$ are in $S_k(B_T)$. Suppose that $b_k \neq l_j$. Then, since $v_j, e'_j \in S_k(B_T)$, the vertex $l_j$ remains unburned after $k$ steps, contradicting the fact that $B_T$ is a burning sequence for $H$. Hence $b_k = l_j$.

    Note that the same argument applies to any vertex $v_p$ in $\mathcal{V} \cap S_k(B_T)$. However, since $b_k \neq l_p$ for $p \neq j$, it follows that $v_j$ is the unique vertex in $\mathcal{V} \cap S_k(B_T)$. Moreover, since $v_j, e'_j \in S_k(B_T)$ and $e'_j \notin B_T$, we have $v_j, e'_j \in B_c(s)$ for some burning source $s \in \mathcal{V} \cup \mathcal{E}$. Hence, by Claim~\ref{clm:EquidistantSource}, we must have $s \in \mathcal{E}$.

    We now construct a vertex sequence $B'_T = (b'_1, b'_2, \dots, b'_{k-1})$ from $B_T$ as follows. For each $i \in [k-1]$, if $b_i \in \mathcal{V}$, set $b'_i = b_i$. Otherwise $b_i \in \mathcal{E}$, and we choose $b'_i$ to be a neighbor of $b_i$ in $\mathcal{V}$ that is closer to $v_j$ (breaking ties arbitrarily). We now burn $H$ as per $B'_T$. Since $s \in \mathcal{E}$, it follows that $v_j \in S_{k-1}(B'_T)$. Furthermore, as $v_j$ is the unique vertex in $\mathcal{V} \cap S_k(B_T)$, burning $H$ as per $B'_T$ leaves only vertices in $\mathcal{L} \cup \mathcal{E'} \cup \mathcal{E}$ unburned after $k-1$ steps. Hence $B'_T$ is a burning sequence for $G$ of length $k-1$, contradicting the assumption that $b(G) = k$. Therefore, $b_T(G_s) > b(G)$, completing the proof.
\end{proof}

Now, we consider the complexity of the total burning problem (i.e., \BNP{} on the total graphs). For any class of graphs $\mathcal{C}$, let $\mathcal{C}_s=\{G_s:G\in \mathcal{C}\}$, where $G_s$ is the spike graph of $G$. It follows from Lemma~\ref{lem:TotalSpike} that if the \BNP\ is \NPC\ for a graph class $\mathcal{C}$, then the total burning number problem is \NPC\ for any graph class $\mathcal{C}'$ such that $\mathcal{C}_s\subseteq \mathcal{C'}$. In particular, let $\mathcal{C}$ be the collection of bounded degree trees (i.e., trees having their maximum degree bounded by a constant). Since the spike graphs of bounded degree trees are again bounded degree trees, we have $\mathcal{C}_s\subseteq \mathcal{C}$. Therefore, as the \BNP\ is \NPC\ for bounded degree trees, we have the following theorem. 

\begin{theorem}
    \label{thm:totalhard}
    The total burning problem is \NPC\ for bounded degree trees. Equivalently, the \BNP\ is \NPC\ for the total graphs of bounded degree trees.
\end{theorem}

\section{Conclusion}

This work advances the understanding of graph burning from both algorithmic and structural perspectives. Our results strengthen the complexity landscape of the \BNP{}, establish an improved upper bound for the burning number of $P_k$-free graphs together with improved algorithms for constructing burning sequences, reveal fundamental relationships between the classical burning number and its edge and total variants, including the resolution of a conjecture on total burning, and determine the computational complexity of these variants on special graph classes. Several interesting questions remain open. It would be worthwhile to characterize the graphs for which the parameters $b(G)$, $b_L(G)$, and $b_T(G)$ coincide, either simultaneously or pairwise. More broadly, resolving the burning number conjecture remains one of the central open problems in the study of graph burning.

\bibliographystyle{SK}
\bibliography{main}

\end{document}